\newtheorem{thm}{Theorem}[section]
\newtheorem{lem}[thm]{Lemma}
\newtheorem{defn}[thm]{Definition}
\theoremstyle{remark}
\newtheorem{remark}[thm]{Remark}
\title{{\bfseries Overlapping Schwarz Methods with Adaptive Coarse Spaces for Multiscale Problems in 3D.}}
\author{Erik Eikeland\thanks{Department of Computing, Mathematics and Physics, Bergen University College, 5020 Bergen, Norway ({Erik.Eikeland@hib.no})}
\and Leszek Marcinkowski\thanks{Department of Mathematics, University of Warsaw, Banacha 2, 02-097 Warszawa, Poland ({Leszek.Marcinkowski@mimuw.edu.pl})}
\and Talal Rahman\thanks{Department of Computing, Mathematics and Physics, Bergen University College, 5020 Bergen, Norway ({Talal.Rahman@hib.no})} 
}
\begin{document}

\date{}
\maketitle
\begin{abstract}
We propose two variants of the overlapping additive Schwarz method for the finite element discretization of the elliptic problem in 3D with highly heterogeneous coefficients. The methods are efficient and simple to construct 
using the abstract framework of the additive Schwarz method, and an idea of adaptive coarse spaces.  In  one variant, the coarse space consists of finite element functions associated with the wire basket nodes and functions based on solving some generalized eigenvalue problem on the faces, and in the other variant, it contains  functions associated with the vertex nodes  with functions based on solving some generalized eigenvalue problems on subdomain faces and on subdomain edges. The functions that are used to build the coarse spaces  are chosen adaptively, they correspond to the eigenvalues that are smaller than a given threshold. The convergence rate of the preconditioned conjugate gradients method in both cases, is shown to be independent of the variations in the coefficients for sufficient number of eigenfunctions in the coarse space. Numerical results are given to support the theory.
\end{abstract}

\markboth{ERIK EIKELAND, LESZEK MARCINKOWSKI, AND TALAL RAHMAN}{Overlapping Schwarz Methods with Adaptive Coarse Spaces for Multiscale Problems in $3$D}

\section{Introduction}
Additive  Schwarz methods are among the most popular domain decomposition methods for solving partial differential equations, they are simple in construction and are inherently parallel, cf. \cite{dolean:2015:IDD,Mathew:2008:DDM,smith2004domain,toselli2005domain}. We consider overlapping additive Schwarz preconditioners for solving scalar elliptic equations with highly varying coefficient in the space $\mathbb{R}^3$. Based on the idea of adaptively constructing the coarse space through solving generalized eigenvalue problems in the lower dimensions, we propose two variants of the algorithm in three dimensions. The resulting system has a condition number which is inversely proportional to $\lambda^{*}$, where $\lambda^{*}$ is the threshold for the good eigenvalues and enters as a parameter for the coarse space in the algorithm. 
The term lower dimensions refers to the fact that the eigenvalue problems are solved either on subdomain faces or on edges. The methods are effective, inherently parallel, and simple to construct. Additive Schwarz method with adaptive coarse space has been considered recently using a different idea which is based on solving generalized eigenvalue problem in the overlap, cf. \cite{dolean:2015:IDD,Dolean:2012:ATL,Nataf:2011:CSC}. 

To motivate the use of solving eigenvalue problems on subdomain boundaries we take a brief look at the steps of estimating convergence of two level overlapping Schwarz methods in its abstract framework, cf. \cite{Mathew:2008:DDM,smith2004domain,toselli2005domain}. As we look for the solution in the finite element space, the analysis of the method in the additive Schwarz framework is based on the assumption that any function $u$ in the space can be split into its coarse component $u_0$ and local components associated with the subdomains, and the splitting is stable with respect to the energy norm. The condition number bound of the preconditioned system then depends explicitly on the constant $C_0^2$ entering into the stability estimate of the splitting, cf. \cite{Mathew:2008:DDM,smith2004domain,toselli2005domain}. Consequently, how robust the method will be with respect to mesh parameters and varying coefficients, depends on how robust $C_0^2$ is with respect to them. Even with highly varying coefficients, as long as the variations are strictly inside subdomains, it is possible to see that this constant will not depend on the variation of the coefficients, cf. e.g. \cite{graham2007domain}. 
The difficulty arises only when we deal with coefficients which vary along subdomain boundaries. 
In which case, the crucial term which needs to be bounded for the stability estimate is the boundary term $\sum_i C/h^2\|\sqrt{\alpha}(u-u_0)\|^2_{L^2(\Omega^h_i)}$, $\Omega^h_i$ being the layer of all elements along the subdomain boundary $\partial \Omega_i$, $\alpha$ the varying coefficient, and $C$ a constant.  
Because the $\alpha$ is varying, estimating the $L^2$ term say using the Poincar\'e or a weighted Poincar\'e inequality will necessarily introduce the contrast into the estimate, i.e. the ratio between the largest and the smallest values of $\alpha$, unless strong assumptions on the distribution of the coefficient are made, cf. \cite{Pechstein:2013:WPI}. A standard multiscale coarse space alone, cf. \cite{graham2007domain,Pechstein2013:FBT}, cannot make the method robust with respect to the contrast, unless some form of enrichment of the coarse space is used that can capture the strong variations along the subdomain boundary and improves the approximation. 
Including selected eigenfunctions of properly defined eigenvalue problems over the subdomain boundary into coarse spaces, enable us  to capture those variations and provide estimates with constants independent of the contrast, which is the main motivation behind the methods presented here. 

The idea of adaptively constructing coarse spaces using eigenfunctions of certain eigenvalue problems has already attracted a lot of interest in the recent years. Some earlier works in this direction are found in \cite{Bjorstad-Koster-Krzyzanowski:HPC:2002,Bjorstad-Krzyzanowski:PPAM2001} around the Neumann-Neumann type substructuring domain decomposition method, and \cite{Chartier:2003:SAMGe} around the algebraic multigrid method, although they were not addressed to solve multiscale problems. In the context of multiscale problems this idea has only recently started to emerge, with its first appearance in \cite{Galvis:2010:DDM,Galvis:2010:DDMR}, as well as in \cite{Nataf:2011:CSC}, and later on in \cite{chung2016adaptive,Dolean:2012:ATL,Efendiev:2012:RTD,Efendiev:2012:RDD,atle2015harmonic,Spilane:2014:ARC} in the additive Schwarz framework. This idea of adaptively constructing coarse space, in other word the primal constraints, for the FETI-DP and BDDC substructuring domain decomposition methods has also been developed and extensively analyzed, cf.
\cite{kim2015bddc,KRR:2015:FDMACS,klawonn2016comparison,mandel2007adaptive,sousedik2013adaptive} in 2D and \cite{calvo2015adaptive,kim2016bddc,klawonn:2016:adaptive,oh:2016:BDDC,pechstein2016unified} in 3D.
 
The rest of this paper is organized as follows: In Section~\ref{sec:differential_problem} we define our problem and its discrete formulation. 
In Section~\ref{sec:two_level_additive_schwarz_preconditioner} we describe the overlapping Schwarz preconditioner, in Section~\ref{sec:coarse_space} we introduce the two new coarse spaces, and in Section~\ref{sec:convergence} we establish the convergence estimate for the preconditioned system. Finally, in Section~\ref{sec:numerical_results}, we present some numerical results of our method. 

\section{Differential problem and discrete formulation}\label{sec:differential_problem}
Here we present our continuous test problem, the scalar elliptic equation with coefficients piecewise constant on each fine scale element and its discrete representation.
Find $u \in H^1_0(\Omega)$ such that
\begin{equation}\label{problem}
 a(u,v)=f(v),\quad v\in H^1_0(\Omega),
\end{equation}
where
\begin{equation}\label{eqn:bilinear}
 a(u,v):=(\alpha(\cdot)\nabla u,\nabla v)_{L^2(\Omega)} \quad \mbox{and} \quad f(v):=\int_\Omega fv dx.
\end{equation}
We assume that $\alpha \in L^\infty(\Omega)$, $\alpha(x)\geq \alpha_0 > 0$, and $f \in L^2(\Omega)$ with $\Omega$ being a polyhedral region in the space $\mathbb{R}^3$.

Let $\mathcal{T}_h(\Omega)$ be a quasi uniform triangulation of $\Omega$ into fine tetrahedral elements $\tau$, where $h=\max_{\tau\in\mathcal{T}_h(\Omega)}$diam$(\tau)$ is the parameter of
$\mathcal{T}_h(\Omega)$, cf. e.g. \cite{Brenner:2008:MTF}. Keeping in mind that a tetrahedral element consists of four triangular faces and six edges, we denote the fine element face by $\tau_t$ and the finite element edge by $\tau_e$.
Let $V_h=V_h^0(\Omega)$ be the finite element solution space of piecewise linear continuous functions:
\begin{eqnarray*}
   V^h=V^0_h(\Omega):=\left\{v\in C_0(\Omega): \; v_{|\tau}\in P_1(x), \quad v_{|\partial\Omega}=0\right\},
\end{eqnarray*}
where $v_{|\tau}$ is the function restricted to an element $\tau\in\mathcal{T}^h(\Omega)$ and $P_1(x)$ is the set of linear polynomials. Note that as the gradient of
$u\in V^h$ is piecewise constant thus $(\rho \nabla u \nabla v)_{L^2(\tau)}=\nabla u \nabla v \int_\tau \rho \:dx$.  
Thus without any loss of generality we  assume that 
$\rho(x)=\alpha_\tau$  for $x\in\tau$ where $\alpha_\tau$ is a positive constant.

The discrete problem is then defined as: Find $u_h \in V^0_h(\Omega)$ such that
\begin{equation}\label{eqn:discrete_problem}
 a(u_h,v)=f(v),\quad v\in V^0_h(\Omega).
\end{equation}
This problem has a unique solution by the Lax-Milgram theorem. 
By using standard nodal basis functions $\phi_i$ with $i\in\{1,...,N_\eta\}$, where $N_\eta$ are the number of individual vertex nodes of the tetrahedrons in $\Omega$,
the above equation may be stated as a system of algebraic equations
\begin{equation}\label{originaldiscreteproblem}
 Au_h=f_h,
\end{equation}
where $(A)_{i,j}=a(\phi_j,\phi_i)$, ${(f_{h})}_i=f(\phi_i)$ and ${(u_h)}_j=u_h(x_{j})$. Here $x_j$ is the coordinate value of the node $j$. The resulting symmetric system is in general very ill-conditioned; any standard iterative method, like the Conjugate Gradients, e.g. cf \cite{Malek:2015:PCG}, may preform badly due to the
ill-conditioning of the system. 
The aim is to introduce an additive Schwarz preconditioner for the original problem (\ref{eqn:discrete_problem}) in order to obtain a well-conditioned system for which the convergence of the conjugate gradient method will be independent of any variations in the coefficient, thereby improving the overall performance. 

\section{Additive Schwarz method}\label{sec:two_level_additive_schwarz_preconditioner}
The two level additive Schwarz method is well known and well understood in the literature, we refer to \cite[Chapter 3]{toselli2005domain} for an overview of the method. 

\subsection{Geometric structures}\label{subsec:geometic_structures}
Let $\Omega$ be partitioned into a set of $N$ nonoverlapping subdomains (or generalized subdomains), $\{\Omega_i\}_{i\in I}$, $I=\{1,\ldots,N\}$, such that each $\overline{\Omega}_i$ (the closure of ${\Omega}_i$) is a sum of elements (fine elements) from $\mathcal{T}_h$, ${\Omega}_i\cap{\Omega}_j = \emptyset$ for $i\neq j$, and $\overline{\Omega} = \cup_{i\in I}\overline{\Omega}_i$. The intersection between two closed subdomains is either an empty set, a closed face (or a closed generalized face which is a sum of closures of fine element faces), a closed edge (or a closed generalized edge which is a sum of closures of fine element edges), or a vertex.
Thus, the triangulation $\mathcal{T}^h(\Omega)$ is aligned with the subdomains $\Omega_i$.  Each subdomain $\Omega_i$ inherits its own local triangulation $\mathcal{T}^h(\Omega_i)=\{\tau\in\mathcal{T}^h(\Omega):\tau\subset\overline{\Omega}_i\}$ such that $\bigcup_i\mathcal{T}^h(\Omega_i)=\mathcal{T}^h(\Omega)$. 
The corresponding set of overlapping subdomains $\{\Omega_i'\}_{i\in I}$ is then defined as follows: extend each subdomain $\Omega_i$ to $\Omega_i'$, by adding to $\Omega_i$ a layer of elements, i.e. sum of $\overline{\tau}_k\in\mathcal{T}^h(\Omega)$ such that $\overline{\tau}_k\cap\partial\Omega_{i}\neq\emptyset$. 

Since the subdomains inherit their triangulation from the global triangulation $\mathcal{T}^h(\Omega)$, the nodes, the edges, and the faces of the tetrahedral elements along the interface $\Gamma=\bigcup_i\partial\Omega_i\setminus\partial\Omega$ will be matching across. The interface is composed of three basic structures: open (generalized) faces, open (generalized) edges, and subdomain vertices. The set of all open edges and subdomain vertices constitute the structure which we call the wire basket, and we denote it by $\mathcal{W}$.
The set of all faces is given by $\mathcal{F}=\{\mathcal{F}_{kl} : \ \mathcal{\overline{F}}_{kl}=\partial\Omega_k\cap\partial\Omega_l, \ |\mathcal{F}_{kl}|>0, \ k\in I, \ l\in I, \ \mbox{and} \ k>l\}$, where $|\cdot|$ is a
natural measure of a surface. Note that since the subdomain vertices are matching across the interface, we have $\Gamma=\bigcup_{kl} \mathcal{\overline{F}}_{kl}\setminus\partial\Omega$.
A closed edge is the intersection between two closed faces, consisting of closed fine element edges. 
The set of all edges are denoted by $\mathcal{E}$.
And the set of subdomain vertices are denoted by $\mathcal{V}$.
The wire basket is defined as
$\mathcal{W}=\bigcup_k\mathcal{\overline{E}}_k\setminus\partial\Omega$.

In the same way as each subdomain inherits a 3D triangulation, each face $\mathcal{F}_{k l}$ inherits a 2D triangulation which we denote by $\mathcal{T}_h(\mathcal{F}_{k l})$, and each edge $\mathcal{E}_k$ inherits a 1D triangulation which we denote by $\mathcal{T}_h(\mathcal{E}_k)$.
For each of the structures, $\Omega$, $\overline{\Omega}$, $\Omega_k$, $\overline{\Omega}_k$, $\mathcal{F}$, $\mathcal{E}$, and $\mathcal{W}$, we use $\Omega_h$, $\overline{\Omega}_h$, $\Omega_{k,h}$, $\overline{\Omega}_{k,h}$, $\mathcal{F}_h$, $\mathcal{E}_h$, and $\mathcal{W}_h$, respectively, to denote the corresponding set of nodal points (vertices of the elements of $\mathcal{T}_h$) which are on the structure.


\subsection{Space decomposition, subproblems, and preconditioned system}
 Let the two local subspaces on $\Omega_k$, be defined as
\begin{eqnarray}
 V_h(\Omega_k)&:=&\{u_{|\overline{\Omega}_k}: u \in V_h\}. \nonumber\\
 V^0_h(\Omega_k)&:=&V_h(\Omega_k)\cap H^1_0(\Omega_k). \label{eq:local_space_zero}
\end{eqnarray}
Functions of $V^0_h(\Omega_k)$ are extended by zero to the rest of the domain $\overline\Omega\setminus\overline\Omega_k$. For the ease of representation, we denote this extended space by the same symbol, that is using $V^0_h(\Omega_k)$.
We can repeat these definitions for extended domains $\Omega_i'$.
Let us decompose the finite element solution space into a coarse space and $N$ local subspaces
\begin{equation}
 V^h=V_0^{(k)}+\sum_{i=1}^N V_i
\end{equation}
Here $V_i=V^0_h(\Omega_i')$ are the local function spaces associated with the overlapping subdomains $\Omega_i'$ for $i=1, \ldots, N$ extended by zero to the rest of $\Omega$, cf. (\ref{eq:local_space_zero}).
Further, $V_0^{(k)},k=1,2$ is the coarse space. In the next section, we introduce the two coarse spaces, the wire basket based coarse space $V_0^{(1)}=V_{\mathcal W}$, cf. (\ref{eq:wire-coarse-sp}),  and the vertex based coarse space $V_0^{(2)}=V_{\mathcal V}$, cf. (\ref{eq:vertex-coarse-sp}).
In both cases these are relatively much smaller subspaces of the finite element space $V_h$.

We define projection like operators, $P_0^{(k)}$ for $k=1, 2$, and $P_i$ for $i=1,\ldots,N$, as follows,
\begin{eqnarray*}
a(P_0^{(k)}u,v)=a(u,v)\quad \forall v\in V_0^{(k)} \ k=1,2\\
a(P_iu,v)=a(u,v)\quad \forall v\in V_i, i=1,\ldots,N.
\end{eqnarray*}


Defining the additive operator $P^{(k)}=P_0^{(k)} + \sum_i P_i$, we get the following system of equation equivalent to the original problem  (\ref{eqn:discrete_problem}) 
in the operator form,
\begin{equation}\label{eq:goodsys}
 P^{(k)}u=g.
\end{equation}

\section{Coarse spaces with enrichment}\label{sec:coarse_space}
For our the additive Schwarz method we introduce two alternative coarse spaces. The first one is based on enriching a wire basket coarse space, and the second one is based on enriching a vertex based coarse space. 

Discrete harmonic extensions are used to extend our functions from subdomain boundaries into subdomains. 
We define our discrete harmonic extension operator below.
\begin{defn}\label{def:DHE}
Let $u|_{\partial \Omega_k}$ be the $u$ restricted to $\partial \Omega_k$. We define $\mathcal{H}_k:V_h(\Omega_k)\rightarrow V_h(\Omega_k)$ as the discrete harmonic extension operator in $\Omega_k$ as follows,
\begin{equation} 
\left\{
\begin{array}{lcr}
a_{|\Omega_k}(\mathcal{H}_k u,v)=0&\quad&\forall v\in V^0_h(\Omega_k),\\
 \mathcal{H}_ku=u_{|\partial\Omega_k} &\quad&       \mathrm{on} \quad \partial \Omega_k.
\end{array}\right.
\end{equation}
A function $u\in V_h(\Omega_k)$ is locally discrete harmonic in $\Omega_k$ if $\mathcal{H}_ku=u$. If
for any $u\in V_h$, we have that all its restrictions to the subdomains are locally discrete harmonic then this function is (piecewise) discrete harmonic in $\Omega$.
\end{defn}
Functions that are locally discrete harmonic have the minimum energy property locally. This property is well known, but for completeness we restate it in the following lemma, cf. \cite{toselli2005domain} for further details on discrete harmonic extensions.
\begin{lem} \label{MinEnHarm}
Let $u\in V_h(\Omega_k)$ be given such that $u=\mathcal{H}_k u$ in the sense of Definition \ref{def:DHE}, then it follows that
\begin{equation}
 a_{|\Omega_k}(u,u)=\min_{\{v|
   v=u \text{ on } \partial \Omega_k\}} a_{|\Omega_k}(v,v).
\end{equation}
\end{lem}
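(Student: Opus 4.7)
The plan is to reduce the statement to the Galerkin orthogonality built into Definition~\ref{def:DHE}. First I would fix an arbitrary admissible competitor $v \in V_h(\Omega_k)$ with $v = u$ on $\partial\Omega_k$ and set $w := v - u$. Since $u$ and $v$ share the same boundary trace, $w$ vanishes on $\partial\Omega_k$ and thus belongs to $V_h^0(\Omega_k)$; this identifies the set of admissible $v$ with the affine space $u + V_h^0(\Omega_k)$, which is the structural observation that makes the whole argument go through.

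Next I would expand the energy by bilinearity,
\[
 a_{|\Omega_k}(v,v) \;=\; a_{|\Omega_k}(u,u) + 2\,a_{|\Omega_k}(u,w) + a_{|\Omega_k}(w,w),
\]
and eliminate the cross term using the harmonic condition $u = \mathcal{H}_k u$: by Definition~\ref{def:DHE}, one has $a_{|\Omega_k}(\mathcal{H}_k u, \varphi) = 0$ for every $\varphi \in V_h^0(\Omega_k)$, and $w$ lies in exactly that space. Because the coefficient $\alpha$ is strictly positive (cf. Section~\ref{sec:differential_problem}), the form $a_{|\Omega_k}$ is nonnegative, so $a_{|\Omega_k}(w,w) \ge 0$ and therefore $a_{|\Omega_k}(v,v) \ge a_{|\Omega_k}(u,u)$. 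Taking the infimum over admissible $v$ and noting that $u$ is itself admissible (choose $w = 0$) yields the claimed equality.

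There is essentially no genuine obstacle here; the result is the standard variational characterization of discrete harmonic extensions, and every step is a one-line consequence of bilinearity together with the defining property of $\mathcal{H}_k$. The only point worth stating explicitly is the identification of the admissible set with $u + V_h^0(\Omega_k)$, which is what allows Galerkin orthogonality in Definition~\ref{def:DHE} to be applied verbatim to the increment $w$.
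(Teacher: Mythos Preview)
Your proof is correct and is precisely the standard variational argument for the minimum energy property of discrete harmonic extensions. The paper does not actually supply a proof of this lemma; it simply states the result as well known and refers to \cite{toselli2005domain}, where exactly the argument you give (Galerkin orthogonality plus nonnegativity of the form) is the one used.
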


\subsection{Wire basket based coarse space}\label{sec:wire-coarse-sp}
The wire basket based coarse space consists of basis functions, one for each node in the wire basket $\mathcal{W}$, plus eigenfunctions corresponding to the first few eigenvalues of generalized eigenvalue problems associated with the faces, cf. Definition~\ref{defGeF}.

For any face $\mathcal{F}_{k l}$, let $V_h(\mathcal{F}_{k l})$ be the space of piecewise linear continuous functions on the face $\mathcal{F}_{k l}\in\mathcal{F}$, and $V_h^0(\mathcal{F}_{k l})$ the corresponding subspace of functions with zero boundary values, that is $$V_h^0(\mathcal{F}_{k l}) := \{v\in V_h(\mathcal{F}_{k l}): \ v(x)=0, \ \forall x\in \partial \mathcal{F}_{k l}\}.$$

We need the following interpolation operator $I_\mathcal{W}:V_h\rightarrow V_h$.
\begin{defn} \label{def:WireB-interpolant}
For any $u\in V_h$, let $I_\mathcal{W}u$ be discrete harmonic (cf. Definition \ref{def:DHE}) such that
\begin{eqnarray*}
 (I_{\mathcal{W}} u) (x) &=& u(x) \quad \forall x\in \mathcal{W}_h, \\
 a_{\mathcal{F}_{k l}}((I_{\mathcal{W}}u)_{|\mathcal{F}_{k l}},v)&=&0 \qquad \forall v\in V_h^0(\mathcal{F}_{k l}) \quad\forall \mathcal{F}_{k l}\subset \Gamma,
\end{eqnarray*}
where
\begin{eqnarray}
a_{\mathcal{F}_{k l}}(u,v)&:=&\sum_{\tau_t\in \mathcal{T}_h(\mathcal{F}_{k l})}\overline{\alpha}_{\tau_t}\int_{\tau_t }\nabla u \nabla v dx \quad u,v\in V_h(\mathcal{F}_{k l}) \label{eq:aFkl}
\end{eqnarray}
and $\overline{\alpha}_{\tau_t}=\max\{\alpha_{\tau_-},\alpha_{\tau_+}\}$ for a triangle $\tau_t$ which is an element of the 2D triangulation of the face $\mathcal{F}_{k l}$, such that $\overline{\tau}_t=\partial\tau_-\cap\partial \tau_+$
for $\tau_+\in \mathcal{T}_h(\Omega_k)$ and $\tau_-\in \mathcal{T}_h(\Omega_l)$.
\end{defn}

For each face $\mathcal{F}_{k l}$, we define the following generalized eigenvalue problem.
\begin{defn} \label{defGeF}
For $i=1,\ldots,\hat M$, where $\hat M=\dim(V_h^0(\mathcal{F}_{k l}))$, find $(\lambda^i_{\mathcal{F}_{k l}}, \xi^i_{\mathcal{F}_{k l}})\in\left(\mathbb{R}\times V^0_h(\mathcal{F}_{k l})\right)$ such that
$\lambda_{\mathcal{F}_{k l}}^1
\leq \ldots \leq \lambda_{\mathcal{F}_{k l}}^{\hat{M}}$ and 
\begin{equation}\label{eq:GeF}
 a_{\mathcal{F}_{k l}}(\xi^i_{\mathcal{F}_{k l}},v)=\lambda_{\mathcal{F}_{k l}}^i b_{\mathcal{F}_{k}}(\xi^i_{\mathcal{F}_{k l}}, v)\quad \forall v\in V^0_h(\mathcal{F}_{k l})
\end{equation}
where $a_{\mathcal{F}_{k l}}(\cdot,\cdot)$ is  defined in (\ref{eq:aFkl}), and $b_{\mathcal{F}_{k l}}(\cdot,\cdot)$ as follows,
\begin{equation} \label{eq:def-b-face}
  b_{\mathcal{F}_{k l}}(u,v):=\sum_{x\in\mathcal{F}_{k l,h}}\overline{\alpha}_xu(x)v(x) \quad u,v\in V_h(\mathcal{F}_{k l})
\end{equation}
where $\overline{\alpha}_x=\max\{\alpha_\tau: x\in \partial \tau, \tau \in \mathcal{T}_h\}$.
Here $V^0_h(\mathcal{F}_{k l})$ is the space of piecewise continuous functions on $\mathcal{T}_h(\mathcal{F}_{k l})$ which are equal to zero on 
$ \partial \mathcal{F}_{k l}$.
\end{defn}
 Note that the bilinear forms $a_{\mathcal{F}_{k l}}(\cdot,\cdot)$  and $b_{\mathcal{F}_{k l}}(\cdot,\cdot)$ in (\ref{eq:GeF}) are symmetric and positive definite on $V^0_h(\mathcal{F}_{k l})$.
We extend $\xi^i_{\mathcal{F}_{k l}}$ by zero to the whole interface $\Gamma$, and then as discrete harmonic function inside each subdomain, denoting the extended function by the same symbol.
Now, let
\begin{equation} \label{eq:WC-enrich}
V_{\mathcal{F}_{k l}}^{en}:=
\mathrm{span}
\{
\xi^i_{\mathcal{F}_{k l}}
\}_{i=1}^{m_{k l}}
\end{equation}
be the the space of eigenfunctions, where $m_{k l}$ is a nonnegative integer smaller than or equal to $\hat M$, a number which is either prescribed or decided adaptively.

The wire basket based coarse space is then defined as
\begin{equation} \label{eq:wire-coarse-sp}
  V_{\mathcal W}=I_{\mathcal{W}}V_h  + \sum_{\mathcal{F}_{k l} \in \mathcal{F}}  V_{\mathcal{F}_{k l}}^{en}.
\end{equation}
The space $I_{\mathcal{W}}V_h$ is a natural extension of the 2D multiscale coarse space introduced in \cite{graham2007domain} to 3D, and the face enrichment spaces $V_{\mathcal{F}_{k l}}^{en}$ are natural extensions of the edge enrichment spaces introduced in \cite{atle2015harmonic} to 3D.


\subsection{Vertex based  coarse space}\label{sec:vertex-coarse-sp}
The vertex based coarse space consists of basis functions, one for each node in $\mathcal{V}$, plus eigenfunctions corresponding to the first few eigenvalues of generalized eigenvalue problems associated with the edges, cf. Definition \ref{defGeE}, and the faces, cf. Definition \ref{defGeFI}.

For any edge $\mathcal{E}_i$, let  $V_h(\mathcal{E}_i)$ be the space of piecewise linear continuous functions on the edge $\mathcal{E}_i\in\mathcal{E}$, and $V_h^0(\mathcal{E}_i)$ the corresponding subspace of functions with zero boundary values, that is $$V_h^0(\mathcal{E}_i) := \{v\in V_h(\mathcal{E}_i): \ v(x)=0, \ \forall x\in \partial \mathcal{E}_i\}.$$

We need the following interpolation operator $I_{\mathcal V}:V_h\rightarrow V_h$.
\begin{defn}\label{def:vertex-interpolant}
For any $u\in V_h$, let $I_{\mathcal V}u$ be discrete harmonic (cf. Definition \ref{def:DHE}) such that
\begin{eqnarray*}
 (I_{\mathcal V} u) (x) &=& u(x) \quad \forall x\in \mathcal{V} \\
 (I_{\mathcal V} u) (x) &=& 0 \quad \forall x\in \Gamma_h\setminus \mathcal{W}_h \\
 a_{\mathcal{E}_i}((I_{\mathcal V}u)_{|\mathcal{E}_i},v) &=&0 \quad \forall v\in V_h^0(\mathcal{E}_i)
 \quad\forall \mathcal{E}_i\subset \mathcal{E}
\end{eqnarray*}
where
\begin{eqnarray}
a_{\mathcal{E}_i}(u,v)&:=&\sum_{e\in \mathcal{T}_h(\mathcal{E}_i)}\overline{\alpha}_e\int_e  u'  v' d x \quad u,v\in V_h(\mathcal{E}_i) \label{eq:aEi}
\end{eqnarray}
and $\overline{\alpha}_e=\max\{\tau\in \mathcal{T}_h: e\subset \partial \tau \}$.
\end{defn}

For each edge $\mathcal{E}_k$, we define the following generalized eigenvalue problem.
\begin{defn} \label{defGeE}
For $i=1,\ldots,M_{\mathcal E_k}$, where $M_{\mathcal E_k}=\dim(V_h(\mathcal{E}_k))$, find $(\lambda^i_{\mathcal{E}_k}, \xi^i_{\mathcal{E}_k})\in\left(\mathbb{R}\times V^0_h(\mathcal{E}_k)\right)$ such that
$\lambda_{\mathcal{E}_k}^1 \leq \ldots \leq \lambda_{\mathcal{E}_k}^{M_{\mathcal E_k}} $ and 
\begin{equation}\label{eq:GeE}
 a_{\mathcal{E}_k}(\xi^i_{\mathcal{E}_k},v)=\lambda_{\mathcal{E}_k}^i b_{\mathcal{E}_{k}}(\xi^i_{\mathcal{E}_k},v)\quad \forall v\in V^0_h(\mathcal{E}_k)
\end{equation}
where $a_{\mathcal{E}_k}(\cdot,\cdot)$ is  defined in (\ref{eq:aEi}), and 
\begin{eqnarray}
 \label{eq:def-b-edge}
  b_{\mathcal{E}_k}(u,v)&:=&h^{-1}\sum_{x\in\mathcal{E}_{k,h}}\overline{\alpha}_x u(x)v(x)  \quad u,v\in V_h(\mathcal{E}_k)
 \end{eqnarray}
with $\overline{\alpha}_x$ from Definition~\ref{defGeF}. 
\end{defn}
Note that, by definition, the bilinear forms $a_{\mathcal{E}_{k}}(\cdot,\cdot)$ and $b_{\mathcal{E}_k}(\cdot,\cdot)$
in (\ref{eq:GeE}) are both symmetric and positive definite on $V^0_h(\mathcal{E}_k)$.

For each face $\mathcal{F}_{k l}$, let 
$$
\overline{\mathcal{F}}_{k l}^B =\bigcup_{\{\tau_t
: \ \overline{\tau}_t\cap\partial\mathcal{F}_{k l}\neq\emptyset\}} \overline{\tau}_t
$$
be the sum of closed fine triangles 
on the face $\mathcal{F}_{k l}$ such that touch the wire basket, and $\mathcal{F}_{k l}^I=\mathcal{F}_{k l} \setminus
\overline{\mathcal{F}}_{k l}^B$
the interior of the sum of closed triangles that are lying strictly inside the face. Obviously,
$\overline{\mathcal{F}}_{k l} = \overline{\mathcal{F}}_{k l}^B \cup \mathcal{F}_{k l}^I$.


For each face $\mathcal{F}_{k l}$, we now define the following generalized eigenvalue problem.
\begin{defn} \label{defGeFI}
For $i=1,\ldots,\hat{M}$, where $\hat M 
=\dim(V_h^0(\mathcal{F}_{k l}))$, find $(\lambda^i_{\mathcal{F}_{k l,I}}, \xi^i_{\mathcal{F}_{k l,I}}) \in \left(\mathbb{R}\times V_h^0(\mathcal{F}_{k l})\right)$ such that
$\lambda_{\mathcal{F}_{k l, I}}^1\leq \ldots \leq \lambda_{\mathcal{F}_{k l, I}}^{\hat{M}}$ 
and
\begin{equation}\label{eq:GeFI}
 a_{\mathcal{F}_{k l,I}}(\xi^i_{\mathcal{F}_{k l,I}},v)=\lambda_{\mathcal{F}_{k l,I}}^i b_{\mathcal{F}_{k l}}(\xi^i_{\mathcal{F}_{k l,I}}, v)\quad \forall v\in V_h^0(\mathcal{F}_{k l})
\end{equation}
where
\begin{eqnarray} \label{eq:def-a-f_kl-I}
 a_{\mathcal{F}_{k l,I}}(u,v)&:=&\sum_{\tau_t\in\mathcal{F}_{k l}^I}\overline{\alpha}_{\tau_t}\int_{\tau_t}\nabla u \nabla v dx\quad u,v\in V_h(\mathcal{F}_{k l}),
\end{eqnarray}
where $\overline{\alpha}_{\tau_t}$ is  defined in Definition~\ref{def:WireB-interpolant}.
\end{defn}
Note that, by definition, the bilinear form $b_{\mathcal{F}_{k l}}(\cdot,\cdot)$ is symmetric and positive definite on $V_h^0(\mathcal{F}_{k l})$, while the bilinear form $a_{\mathcal{F}_{k l,I}}(\cdot,\cdot)$ in (\ref{eq:GeFI}) is symmetric and positive semidefinite on this space. We know its kernel, it is the one dimensional space containing functions that are constant over $\mathcal{F}_{k l}^I$, consequently, we have
$0=\lambda_{\mathcal{F}_{k l, I}}^1< \lambda_{\mathcal{F}_{k l, I}}^2 \leq  \ldots  \leq \lambda_{\mathcal{F}_{k l, I}}^{\hat{M}}$.

Analogously, as in the wire basket based coarse space, we extend $\xi^i_{\mathcal{E}_k}$ and $\xi^i_{\mathcal{F}_{k l,I}}$ by zero to the whole $\Gamma$, and then as discrete harmonic functions inside each subdomain, denoting the extended function by the same symbols.
Now, define the two spaces of eigenfunctions, one associated with the edges and one associated with the faces, as
\begin{equation} \label{eq:VC-enrich}
V_{\mathcal{F}_{k l,I}}^{en}:=
\mathrm{span}
\{
\xi^i_{\mathcal{F}_{k l,I}}
\}_{i=1}^{n_{k l}} \quad \mbox{and} \quad
V_{\mathcal E_k}^{en}:=
\mathrm{span}
\{
\xi^i_{\mathcal E_k}
\}_{i=1}^{m_k},
\end{equation}
respectively. Both $n_{k l}$ and $m_k$ are integers such that $1\leq n_{k l}\leq\hat M$ and $0 \leq m_k \leq M_{\mathcal E_k}$, and are either prescribed or decided adaptively.

The vertex based coarse space is then defined as
\begin{equation}\label{eq:vertex-coarse-sp}
  V_{\mathcal V}=I_{\mathcal{V}}V_h  + \sum_{\mathcal{F}_{k l} \in \mathcal{F}}  V_{\mathcal{F}_{k l,I}}^{en} +\sum_{\mathcal{E}_k \in \mathcal{E}}  V_{\mathcal E_k}^{en}.
\end{equation}

\begin{remark}
It should be pointed out here that many of the calculations indicated by the use of piecewise discrete harmonic extensions in the definitions above are redundant since, in practice, they will often be extensions of zero on subdomain boundaries. 
\end{remark}

\section{Convergence estimate for the preconditioned system}\label{sec:convergence}

In this section we prove that the condition number of our preconditioned system can be kept low, and independent of the contrast, if our coarse space enrichments are properly chosen. The main result is stated in Theorem~\ref{thm:ASM-cond-est}.

\begin{thm}\label{thm:ASM-cond-est}
Let $k=1$ and $k=2$ in the superscript refer to the two coarse spaces: the wire basket based coarse space and the vertex based coarse space, respectively. Then, for $P^{(k)}$, cf. (\ref{eq:goodsys}), we have
\begin{equation}
  \left(C_0^{(k)}\right)^{-2} a(u,u)\preceq a(P^{(k)}(u,u)\preceq a(u,u)
\end{equation}
with
\begin{eqnarray*}
  \left(C_0^{(1)}\right)^2 &=& 1+\max_{\mathcal{F}_{k l} \in \mathcal{F}} \frac{1}{\lambda_{\mathcal{F}_{k l}}^{m_{k l}+1}}
  \\
  \left(C_0^{(2)}\right)^2 &=& 1+
  \max\left\{\max_{\mathcal{F}_{k l} \in \mathcal{F}} (\lambda_{\mathcal{F}_{k l,I}}^{n_{k l}})^{-1},
        \max_{\mathcal{E}_k \in \mathcal{E}} (\lambda_{\mathcal{E}_k}^{m_k})^{-1}\right\}
\end{eqnarray*}
Here $\lambda_{\mathcal{F}_{k l}}^{i}$ is defined in (\ref{eq:GeF}),
$\lambda_{\mathcal{E}_k}^i$ is from (\ref{eq:GeE}), and $\lambda_{\mathcal{F}_{k l,I}}^i$ is from
(\ref{eq:GeFI}), and the integer parameters $m_{k l},n_{k l}, m_k$ are defined in (\ref{eq:WC-enrich})
or (\ref{eq:VC-enrich}), respectively.
\end{thm}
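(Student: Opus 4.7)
The plan is to invoke the standard abstract additive Schwarz framework (cf.\ \cite{toselli2005domain}, Lemma 2.5). The upper bound $a(P^{(k)}u,u)\le a(u,u)$ follows from a finite-coloring argument on the overlapping cover $\{\Omega_i'\}$ together with the fact that the $P_i$ and $P_0^{(k)}$ are $a$-orthogonal projections, and needs no new ideas. Thus everything reduces to producing, for each $u\in V^h$, a stable decomposition $u=u_0+\sum_{i=1}^N u_i$ with $u_0\in V_0^{(k)}$, $u_i\in V_i$, and
\begin{equation*}
  a(u_0,u_0)+\sum_{i=1}^N a(u_i,u_i)\;\le\;\bigl(C_0^{(k)}\bigr)^2\,a(u,u).
\end{equation*}
Throughout I assume without loss of generality that $u$ is piecewise discrete harmonic, since harmonic extension only decreases local energies by Lemma \ref{MinEnHarm}.

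\textbf{Construction of $u_0$.} For $k=1$, set $u_0:=I_{\mathcal W}u+\sum_{\mathcal F_{kl}\in\mathcal F}Q^{(1)}_{kl}\bigl(u-I_{\mathcal W}u\bigr)$, where $Q^{(1)}_{kl}$ is the $b_{\mathcal F_{kl}}$-orthogonal projection onto the eigenspace $V^{en}_{\mathcal F_{kl}}$, the result extended by zero to the rest of $\Gamma$ and then discrete-harmonically into every subdomain. The residual $w:=u-u_0$ then vanishes on the wire basket and, on the interior of every face $\mathcal F_{kl}$, lies in the $b_{\mathcal F_{kl}}$-orthogonal complement of $\mathrm{span}\{\xi^1_{\mathcal F_{kl}},\dots,\xi^{m_{kl}}_{\mathcal F_{kl}}\}$. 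For $k=2$, proceed analogously but in two stages: starting from $I_{\mathcal V}u$, first subtract on each edge the $b_{\mathcal E_k}$-projection onto $V^{en}_{\mathcal E_k}$ of the edge residual, and then subtract on the interior of each face the $b_{\mathcal F_{kl}}$-projection onto $V^{en}_{\mathcal F_{kl,I}}$ of the face residual. After these two corrections, the residual $w=u-u_0$ vanishes at every subdomain vertex, and on each edge (resp.\ face interior) it is $b$-orthogonal to the selected eigenspace. Define the local contributions by $u_i:=I^h\bigl(\theta_i w\bigr)$ where $\theta_i$ is a piecewise-linear partition of unity subordinate to $\{\Omega_i'\}$ and $I^h$ is the nodal interpolant. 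Since $w$ vanishes at the wire basket / vertex nodes as appropriate, most of $w$ in fact lives on single faces (or face-interiors and edges), which is the algebraic point that makes the subsequent estimates decoupled.

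\textbf{Key estimates.} The Rayleigh-quotient characterization of the generalized eigenvalue problems in Definitions \ref{defGeF}, \ref{defGeE}, \ref{defGeFI} immediately yields, for the residuals $w|_{\mathcal F_{kl}}$, $w|_{\mathcal E_k}$ lying in the $b$-orthogonal complement of the retained eigenspaces,
\begin{equation*}
  b_{\mathcal F_{kl}}(w,w)\le\frac{1}{\lambda^{m_{kl}+1}_{\mathcal F_{kl}}}\,a_{\mathcal F_{kl}}(w,w),\qquad
  b_{\mathcal E_k}(w,w)\le\frac{1}{\lambda^{m_k+1}_{\mathcal E_k}}\,a_{\mathcal E_k}(w,w),
\end{equation*}
and the analogous inequality involving $a_{\mathcal F_{kl,I}}$ and $\lambda^{n_{kl}+1}_{\mathcal F_{kl,I}}$. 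On the other hand, a standard partition-of-unity calculation (using $\|\nabla\theta_i\|_{L^\infty}\lesssim 1/h$ on the boundary layer and the convention $\overline\alpha_{\tau_t}=\max\{\alpha_{\tau_-},\alpha_{\tau_+}\}$) gives the boundary-layer bound
\begin{equation*}
  a(u_i,u_i)\;\lesssim\;a_{|\Omega_i}(w,w)+\sum_{\mathcal F_{kl}\subset\partial\Omega_i}\Bigl(a_{\mathcal F_{kl}}(w,w)+b_{\mathcal F_{kl}}(w,w)\Bigr)+\sum_{\mathcal E_k\subset\partial\Omega_i}\Bigl(a_{\mathcal E_k}(w,w)+b_{\mathcal E_k}(w,w)\Bigr),
\end{equation*}
together with the face/edge energy control $a_{\mathcal F_{kl}}(w,w)+a_{\mathcal E_k}(w,w)\lesssim a_{|\Omega_k}(u,u)+a_{|\Omega_l}(u,u)$, which uses the minimum-energy property of discrete harmonic extensions and the fact that $I_{\mathcal W}u$ (respectively $I_{\mathcal V}u$ with its edge-interpolated and face-interpolated corrections) is energy-minimising on faces and on edges. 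Combining the eigenvalue inequalities with the boundary-layer bound and summing over $i$, the $b$-terms are absorbed into the $a$-terms at the cost of the factor $1+\max(\lambda^{-1})$, which is exactly $(C_0^{(k)})^2$.

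\textbf{Main obstacle.} The technical heart of the argument is the boundary-layer estimate in three dimensions with strongly varying $\alpha$: one must localise $w$ from the full interface to individual faces (in the wire basket case) or to individual faces, edges and vertices (in the vertex case), and bound discrete $L^2$-type norms of these pieces on a layer of elements adjacent to $\partial\Omega_i$ by precisely the bilinear forms $b_{\mathcal F_{kl}}$ and $b_{\mathcal E_k}$ appearing in the eigenvalue problems. This relies on robust discrete trace and extension inequalities and, crucially, on the $\overline{\alpha}$-conventions in Definitions \ref{def:WireB-interpolant}, \ref{defGeF}, \ref{def:vertex-interpolant}, \ref{defGeE} being chosen so that the maximum coefficient on any fine element touching a nodal point dominates the corresponding weight on both sides of the inequality. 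In the vertex-based case there is an additional subtlety: controlling the face-interior residual separately from the edge residual requires that the splitting into "wire-basket edge values" and "face-interior values" be $a_{\mathcal F_{kl}}$-stable, which is why the paper works with $a_{\mathcal F_{kl,I}}$ (with the kernel of constants on the interior patch) and pays for it with the restriction $n_{kl}\ge 1$ visible in the theorem statement.
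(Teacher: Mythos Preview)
Your proposal is correct and follows essentially the same route as the paper: the same coarse interpolant $u_0$ (the paper calls it $I_0^{\mathcal W}u$, resp.\ $I_0^{\mathcal V}u$), the same local pieces $u_i=I_h(\theta_i(u-u_0))$, the same eigenvalue inequality from the Rayleigh quotient (the paper's Lemma~\ref{lem:EigLemma}), and the same passage from face/edge forms back to subdomain energies. One caution on your displayed boundary-layer bound: the quantities $a_{\mathcal F_{kl}}(w,w)$ and $a_{\mathcal E_k}(w,w)$ are lower-dimensional and do \emph{not} satisfy $a_{\mathcal F_{kl}}(w,w)\lesssim a_{|\Omega_k}(u,u)+a_{|\Omega_l}(u,u)$ without an extra factor of $h$; in the paper the partition-of-unity step (Lemma~\ref{lem:L_Estimate}) produces $h\,b_{\mathcal F_{kl}}(w,w)$ and $h^2\,b_{\mathcal E_k}(w,w)$ directly, after which the eigenvalue inequality and the scaling $h\,a_{\mathcal F_{kl}}(u,u)\lesssim a_{|\Omega_k}(u,u)+a_{|\Omega_l}(u,u)$ close the argument---so once you track the powers of $h$ your sketch goes through verbatim.
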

The proof is based on the abstract Schwarz framework, cf. e.g. \cite{smith2004domain,toselli2005domain,Mathew:2008:DDM}  and is given at the end of the section. The following lemmas are required for the proof.

\vspace{.2cm}
\begin{remark}
In case of constant $\alpha$ and regular mesh the minimal eigenvalue of the problems are of $\mathcal{O}((\frac{h}{H})^2)$ and the maximal eigenvalue is of $\mathcal{O}(1)$.
\end{remark}
\vspace{.2cm}

\begin{lem}\label{lem:L_Estimate}
Let $u\in V_h$ be discrete harmonic i.e. $u_k := u_{|\overline{\Omega}_k}=\mathcal{H}_k u$ in the a-norm $a_{|\Omega_k}(\cdot,\cdot)$ or be equal to zero at all interior nodes that are in $\Omega_{k,h}$, then it follows that
\begin{equation} \label{eq:lem:L_Estimate}
 a_{|\Omega_k}(u,u)\leq h \sum_{x \in \partial\Omega_{k,h}} \overline{\alpha}_x |u(x)|^2.
\end{equation}
In particular if $u$ is zero at $\mathcal{W}_h$ then
\begin{eqnarray}
a_{|\Omega_k}(u,u)&\preceq h\sum_{\mathcal{F}_{k l}\subset\partial\Omega_k} b_{\mathcal{F}_{k l}}(u,u),
\end{eqnarray}
and if $u$ is zero at all vertices of $\partial \Omega_k$ then
\begin{eqnarray}
a_{|\Omega_k}(u,u)&\preceq h^2\sum_{\mathcal{E}_i\subset\Omega_k} b_{\mathcal{E}_i}(u,u)+h\sum_{\mathcal{F}_{k l}\subset\partial\Omega_k} b_{\mathcal{F}_{k l}}(u,u).
\end{eqnarray}
\end{lem}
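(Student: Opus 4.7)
The plan is to reduce everything to the first inequality \eqref{eq:lem:L_Estimate}, and then to obtain the two specialized bounds simply by splitting the boundary sum according to which geometric structure (face or edge) contains each node where $u$ is nonzero.

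For the first inequality, I will handle both cases in the hypothesis simultaneously. Introduce the auxiliary function $\tilde u \in V_h(\Omega_k)$ defined by $\tilde u(x) = u(x)$ for $x\in\partial\Omega_{k,h}$ and $\tilde u(x) = 0$ at all interior nodes of $\Omega_k$. If $u$ is already zero at interior nodes, then $u=\tilde u$ and there is nothing to reduce. If $u$ is discrete harmonic, Lemma~\ref{MinEnHarm} gives $a_{|\Omega_k}(u,u) \leq a_{|\Omega_k}(\tilde u,\tilde u)$. So in either case it suffices to estimate $a_{|\Omega_k}(\tilde u, \tilde u)$.

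Since $\tilde u$ vanishes at every interior node, only tetrahedra $\tau$ having at least one vertex on $\partial\Omega_k$ contribute. For such a $\tau$, a standard scaling argument on the reference tetrahedron together with quasi-uniformity of $\mathcal T_h$ gives
\begin{equation*}
\int_\tau |\nabla \tilde u|^2\,dx \;\preceq\; h \sum_{\substack{x \text{ vertex of }\tau\\ x\in\partial\Omega_{k,h}}} |u(x)|^2,
\end{equation*}
because the gradient of a $P_1$ function is controlled by $h^{-1}$ times the spread of its nodal values, while $|\tau|\sim h^3$. Multiplying by $\alpha_\tau$, using $\alpha_\tau \leq \overline{\alpha}_x$ for each boundary vertex $x$ of $\tau$, and summing over all boundary-touching tetrahedra (noting the bounded number of tetrahedra sharing a common vertex) yields \eqref{eq:lem:L_Estimate}.

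Next, to obtain the two special forms, I split the nodal sum on the right-hand side of \eqref{eq:lem:L_Estimate} into the disjoint pieces furnished by the decomposition of $\partial\Omega_k$ into subdomain vertices, open edges $\mathcal{E}_i$, and open faces $\mathcal{F}_{k l}$. When $u$ vanishes on $\mathcal W_h$ (which includes both vertices and edges), only face-interior nodes contribute, and by definition \eqref{eq:def-b-face} we immediately recognize $\sum_{x\in\mathcal{F}_{k l,h}} \overline{\alpha}_x |u(x)|^2 = b_{\mathcal{F}_{k l}}(u,u)$, producing the first specialized inequality. When $u$ vanishes only at subdomain vertices, the nodes split into edge nodes and face-interior nodes; the face part is handled identically, while for the edge part definition \eqref{eq:def-b-edge} carries an extra factor $h^{-1}$, so that $\sum_{x\in\mathcal{E}_{i,h}}\overline{\alpha}_x|u(x)|^2 = h\,b_{\mathcal{E}_i}(u,u)$, giving the additional $h^2$ prefactor in front of the edge sum.

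The only genuinely technical point is the element-by-element scaling estimate in the second paragraph; everything else is bookkeeping. In particular, one must make sure that each boundary node is counted a bounded number of times across adjacent tetrahedra so that no implicit constant depends on the local mesh configuration — this is standard under quasi-uniformity. The use of Lemma~\ref{MinEnHarm} to reduce the discrete-harmonic case to the special ``zero interior'' case is the conceptual shortcut that keeps the proof clean.
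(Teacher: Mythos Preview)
Your proof is correct and follows essentially the same route as the paper: reduce to the zero-interior extension via the minimal-energy property (Lemma~\ref{MinEnHarm}), apply an elementwise inverse/scaling estimate on boundary-touching tetrahedra, replace $\alpha_\tau$ by $\overline{\alpha}_x$, and then read off the two specialized bounds directly from the definitions of $b_{\mathcal{F}_{kl}}$ and $b_{\mathcal{E}_i}$. The only cosmetic difference is that the paper splits the scaling step into a local inverse inequality followed by the discrete $L^2$-equivalence, whereas you combine them into one scaling argument.
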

\begin{proof}
The first part of the proof follows from the fact that a discrete harmonic function has the minimal energy among all functions taking the same values on the boundary. Hence, $a_{|\Omega_k}(u,u)\leq a_{|\Omega_k}(\hat{u},\hat{u})$ for any $\hat{u}\in V_h(\Omega_k)$ which is equal to $u$ on $\partial\Omega_k$ and zero at the interior nodes $\Omega_{k,h}$. The other case means that $u=\hat{u}$. Consequently,
 \begin{eqnarray*}
  a_{|\Omega_k}(u,u)&\leq& a_{|\Omega_k}(\hat{u},\hat{u})
  = \sum_{\tau \in \mathcal{T}_h(\Omega_k)}
   \alpha_\tau \int_\tau  |\nabla \hat{u}|^2 \: d x
   \preceq h^{-2} \sum_{\tau \in \mathcal{T}_h(\Omega_k)}
   \alpha_\tau \int_\tau  |\hat{u}|^2 d x \\
   &\preceq& h \sum_{\tau \in \Omega_k^h}
   \alpha_{\tau} \sum_{x\in\partial \tau} |\hat{u}(x)|^2
 \end{eqnarray*}
where $\Omega_k^h$ is the $h$ boundary layer that is the sum of elements of $\mathcal{T}_h(\Omega_k)$ that touch (has a vertex on) the boundary $\partial\Omega_k$. We used a local inverse inequality and the discrete equivalence of the $L^2$ norm on each $\tau$. Finally, utilizing the fact that $\hat{u}$ is zero at the interior nodal points and taking maximum over $\alpha_\tau$ such that $x\in \partial \tau$ we get
$$
   a_{|\Omega_k}(u,u) \preceq h \sum_{x \in \partial\Omega_{k,h}}  \overline{\alpha}_x |u(x)|^2.
$$
The last two statements of the lemma follow directly from the first statement of the lemma plus the definitions
of the bilinear forms defined in (\ref{eq:def-b-face}) and (\ref{eq:def-b-edge}).
\end{proof}

We restate \cite[Lemma 2.2]{atle2015harmonic} which contains important estimates for the eigenfunctions found in (\ref{eq:GeF}) and (\ref{eq:GeE}).

\begin{lem}\label{lem:EigLemma}
 Let $V$ be a finite dimensional real space and consider the generalized eigenvalue problem: Find the eigenpair $(\lambda_k,\xi_k) \in \mathbb{R}\times V$ such that $b(\xi_k,\xi_k)=1$ and
 $$
   a(\xi_k,v)=\lambda_k b(\xi_k,v) \quad \forall v\in V
 $$
where the bilinear form $b(\cdot,\cdot)$ is symmetric positive definite, and the bilinear form $a(\cdot,\cdot)$ is  symmetric positive semi-definite. Then there exist $M=dim(V)$ eigenpairs with real eigenvalues ordered  as follows $0\leq \lambda_1\leq\ldots\leq \lambda_M$. 
If $\lambda_k$ is the smallest positive eigenvalue then the operator $\Pi_m:V\rightarrow V$ is defined for $k-1\leq m < M$  as
 $$
  \Pi_mu=\sum_{k=1}^m b(u,\xi_k)\xi_k
 $$ is the $b(\cdot,\cdot)$-orthogonal projection such that
\begin{equation}
 |\Pi_mv|_a\leq|v|_a\quad\text{and}\quad |v-\Pi_mv|_a \leq|v|_a, \quad \forall v\in V,
\end{equation}
and
\begin{equation}
 \|v-\Pi_mv\|_b^2\leq\frac{1}{\lambda_{m+1}}|v-\Pi_mv|_a^2, \quad \forall v\in V,
\end{equation}
where $|v|_a^2=a(v,v)$ and $\|v\|_b^2=b(v,v)$.
\end{lem}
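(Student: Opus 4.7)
The plan is to reduce the generalized eigenvalue problem to a standard symmetric eigenvalue problem by using $b(\cdot,\cdot)$ as an inner product on $V$, then extract the projection estimates from the resulting spectral decomposition by a direct coefficient computation.

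First, since $b(\cdot,\cdot)$ is symmetric positive definite on the finite dimensional space $V$, it is an inner product. I would define the linear operator $T\colon V\to V$ by requiring $b(Tu,v)=a(u,v)$ for all $v\in V$; existence and uniqueness of $Tu$ follow from the Riesz representation theorem applied with the $b$-inner product, and $T$ is self-adjoint with respect to $b$ because $a$ is symmetric. Moreover $b(Tu,u)=a(u,u)\geq 0$ so $T$ is positive semi-definite in the $b$-inner product. By the finite-dimensional spectral theorem applied to $T$ in the $b$-inner product, there exists a $b$-orthonormal basis $\xi_1,\ldots,\xi_M$ of eigenvectors of $T$ with real eigenvalues $0\leq \lambda_1\leq\ldots\leq\lambda_M$. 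Unwinding the definition of $T$, each pair $(\lambda_i,\xi_i)$ satisfies exactly the generalized eigenvalue problem of the lemma, and $b(\xi_i,\xi_j)=\delta_{ij}$.

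Next I would establish $a$-orthogonality of the basis: testing the eigenvalue identity with $\xi_j$ gives $a(\xi_i,\xi_j)=\lambda_i b(\xi_i,\xi_j)=\lambda_i\delta_{ij}$. Consequently for any $v\in V$, writing $v=\sum_{k=1}^M c_k\xi_k$ with coefficients $c_k=b(v,\xi_k)$, I obtain by bilinearity the diagonal expansions
\begin{equation*}
\|v\|_b^2=\sum_{k=1}^M c_k^2,\qquad |v|_a^2=\sum_{k=1}^M \lambda_k c_k^2.
\end{equation*}
The $b$-orthogonality of the basis also shows that $\Pi_m u=\sum_{k=1}^m b(u,\xi_k)\xi_k$ is the $b$-orthogonal projection onto $\mathrm{span}\{\xi_1,\ldots,\xi_m\}$; in particular $\Pi_m v=\sum_{k=1}^m c_k\xi_k$ and $v-\Pi_m v=\sum_{k=m+1}^M c_k\xi_k$.

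With this representation in hand, the three estimates are a direct computation. For the $a$-seminorm bounds, the $a$-orthogonality yields
\begin{equation*}
|\Pi_m v|_a^2=\sum_{k=1}^m \lambda_k c_k^2\leq |v|_a^2,\qquad |v-\Pi_m v|_a^2=\sum_{k=m+1}^M \lambda_k c_k^2\leq |v|_a^2,
\end{equation*}
since every $\lambda_k\geq 0$. For the remaining estimate, I use that by hypothesis $\lambda_k$ is the smallest positive eigenvalue and $m\geq k-1$, so $\lambda_{m+1}>0$ and $\lambda_j\geq\lambda_{m+1}$ for all $j\geq m+1$; hence
\begin{equation*}
\|v-\Pi_m v\|_b^2=\sum_{k=m+1}^M c_k^2\leq \frac{1}{\lambda_{m+1}}\sum_{k=m+1}^M \lambda_k c_k^2=\frac{1}{\lambda_{m+1}}|v-\Pi_m v|_a^2.
\end{equation*}
The only subtle point, which I would handle explicitly at the start of this last step, is the semi-definiteness of $a$: one must verify that $\lambda_{m+1}>0$ before dividing by it, which is exactly why the lemma restricts to $m\geq k-1$. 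Everything else is routine diagonalization arithmetic, so this existence-of-positive-eigenvalue bookkeeping is the main place where care is needed.
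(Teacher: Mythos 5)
Your proof is correct. The paper itself does not prove this lemma---it restates it from the cited references (Gander--Loneland--Rahman and Spillane et al.) and points there for the proof---and the argument used in those references is exactly the standard one you give: diagonalize the $b$-self-adjoint operator $T$ with $b(Tu,v)=a(u,v)$ to obtain a simultaneously $b$-orthonormal and $a$-orthogonal eigenbasis, expand $v$ in that basis, and read off the three estimates, with the restriction $m\geq k-1$ guaranteeing $\lambda_{m+1}>0$ before dividing, a point you handle explicitly and correctly.
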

See \cite{atle2015harmonic,Spilane:2014:ARC} for the proof.


We introduce the wire basket based coarse space interpolator or the interpolat\-ion operator $I_0^{\mathcal W}:V_h \rightarrow V_{\mathcal W} \subset V_h$ as
\begin{eqnarray}\label{eq:def-wire-coarse-int-op}
 I_0^{\mathcal W}u&:=& I_{\mathcal W} u +\sum_{\mathcal{F}_{k l}\in \mathcal{F}}
  \Pi^{\mathcal{F}_{k l}}_{m_{k l}}(u - I_{\mathcal W} u)
\end{eqnarray}
where $\Pi^{\mathcal{F}_{k l}}_{m_{k l}}:V_h \rightarrow
V_{\mathcal{F}_{k l}}^{en} \subset V_h$ is defined as follows,
\begin{eqnarray*}
 \Pi^{\mathcal{F}_{k l}}_{m_{k l}} u=
  \sum_{\mathcal{F}_{k l} \in \mathcal{F}}
  \sum_{i=1}^{m_{k l}}
  b_{\mathcal{F}_{k l}}(u|_{\mathcal{F}_{k l}},\xi^i_{\mathcal{F}_{k l}})\xi^i_{\mathcal{F}_{k l}},
\end{eqnarray*}
cf. also (\ref{eq:WC-enrich}) and (\ref{eq:def-b-face}).

We have the following lemma estimating the coarse space interpolant.
\begin{lem} \label{lem:wc-int-est}
Let  the wire basket based coarse space interpolator $I_0^{\mathcal W}$ be defined in (\ref{eq:def-wire-coarse-int-op}). Then
for $u\in V_h$
\begin{equation}
  a(u - I_0^{\mathcal W} u,u - I_0^{\mathcal W} u) \preceq \left(1+
  \max_{\mathcal{F}_{k l} \in \mathcal{F}} \frac{1}{\lambda_{\mathcal{F}_{k l}}^{m_{k l}+1}}
  \right)
  a(u,u)
\end{equation}
where $\lambda_{\mathcal{F}_{k l}}^{m_{kl}+1}$ is the $(m_{kl}+1)$th smallest eigenvalue of the generalized eigenvalue problem in Definition \ref{defGeF}. 
\end{lem}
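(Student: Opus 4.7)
The plan is the classical two-step route: first reduce to the piecewise discrete-harmonic part of $u$ via an $a$-orthogonal decomposition, then apply Lemma~\ref{lem:L_Estimate} to move the 3D energy onto face $b$-forms and Lemma~\ref{lem:EigLemma} to bound those face $b$-forms by the inverse of the $(m_{kl}+1)$-st eigenvalue times a face $a$-form.

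Write $u = u^{\mathrm{int}} + u^{\mathrm{dh}}$ with $u^{\mathrm{int}}|_{\partial\Omega_k}=0$ for each $k$ and $u^{\mathrm{dh}}|_{\Omega_k} = \mathcal{H}_k u$; this split is $a$-orthogonal and $a(u^{\mathrm{int}},u^{\mathrm{int}}) \le a(u,u)$. Since $I_0^{\mathcal W}$ in (\ref{eq:def-wire-coarse-int-op}) depends only on the subdomain traces of $u$ and produces discrete-harmonic functions (both $I_{\mathcal W} u$ and the extended eigenfunctions are discrete harmonic), one has $I_0^{\mathcal W} u^{\mathrm{int}}=0$ and $I_0^{\mathcal W} u = I_0^{\mathcal W} u^{\mathrm{dh}}$, so $u - I_0^{\mathcal W} u = u^{\mathrm{int}} + (u^{\mathrm{dh}} - I_0^{\mathcal W} u^{\mathrm{dh}})$ is again $a$-orthogonal. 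The first summand contributes the additive $1$ to the bound, so from now on we may assume $u$ is discrete harmonic.

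Set $v := u - I_{\mathcal W} u$. By Definition~\ref{def:WireB-interpolant}, $v$ is discrete harmonic, vanishes at every wire-basket node, and satisfies $v|_{\mathcal F_{kl}} \in V_h^0(\mathcal F_{kl})$ for every face; hence $w := u - I_0^{\mathcal W} u$ is discrete harmonic, zero on $\mathcal W_h$, and restricts to $(I-\Pi^{\mathcal F_{kl}}_{m_{kl}})v$ on each face $\mathcal F_{kl}$. Applying the second statement of Lemma~\ref{lem:L_Estimate} subdomain-wise and summing, followed by Lemma~\ref{lem:EigLemma} on each face (the $b$-to-$a$ estimate together with the non-expansiveness of $I-\Pi$ in the $a$-seminorm), yields
\begin{equation*}
a(w,w) \preceq h\sum_{\mathcal F_{kl}} b_{\mathcal F_{kl}}\bigl((I-\Pi^{\mathcal F_{kl}}_{m_{kl}})v,(I-\Pi^{\mathcal F_{kl}}_{m_{kl}})v\bigr) \le \max_{\mathcal F_{kl}} \frac{1}{\lambda_{\mathcal F_{kl}}^{m_{kl}+1}} \cdot h\sum_{\mathcal F_{kl}} a_{\mathcal F_{kl}}(v,v),
\end{equation*}
and the $a_{\mathcal F_{kl}}$-orthogonality of $I_{\mathcal W} u$ to $V_h^0(\mathcal F_{kl})$ built into Definition~\ref{def:WireB-interpolant} gives $a_{\mathcal F_{kl}}(v,v) \le a_{\mathcal F_{kl}}(u,u)$.

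It remains to show $h\sum_{\mathcal F_{kl}} a_{\mathcal F_{kl}}(u,u) \preceq a(u,u)$. For each fine face-triangle $\tau_t \subset \mathcal F_{kl}$, pick the adjacent tetrahedron $\tau(t)\in\{\tau_-,\tau_+\}$ realising $\overline\alpha_{\tau_t}=\alpha_{\tau(t)}$; since $\nabla u$ is constant on the shape-regular $\tau(t)$, a trivial element-wise scaling gives $\int_{\tau_t}|\nabla u|^2\,dA \preceq h^{-1}\int_{\tau(t)}|\nabla u|^2\,dx$, whence $h\,\overline\alpha_{\tau_t}\int_{\tau_t}|\nabla u|^2\,dA \preceq \alpha_{\tau(t)}\int_{\tau(t)}|\nabla u|^2\,dx$. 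Summing over all face triangles (each tetrahedron lying on at most a bounded number of coarse faces) closes the argument. I expect this last bookkeeping step, exploiting the $\max$ in $\overline\alpha_{\tau_t}$ to absorb the contrast into the 3D energy, to be the technical heart of the proof; everything else is a direct application of the two cited lemmas.
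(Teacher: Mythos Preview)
Your proposal is correct and follows essentially the same route as the paper: the harmonic/interior $a$-orthogonal split to produce the additive~$1$, Lemma~\ref{lem:L_Estimate} to pass from subdomain energies to face $b$-forms, Lemma~\ref{lem:EigLemma} together with the $a_{\mathcal F_{kl}}$-orthogonality of $I_{\mathcal W}$ to reduce to $h\,a_{\mathcal F_{kl}}(u,u)$, and finally the elementwise bound $h\,\overline\alpha_{\tau_t}\!\int_{\tau_t}|\nabla u|^2\preceq \alpha_{\tau(t)}\!\int_{\tau(t)}|\nabla u|^2$ to return to $a(u,u)$. The only cosmetic differences are that the paper performs the harmonic split on $w=u-I_0^{\mathcal W}u$ rather than on $u$, and carries out the last step via the vertex-difference equivalence $|u|_{H^1(\tau)}^2\asymp h\sum_{x,y\in\partial\tau}|u(x)-u(y)|^2$ instead of your direct scaling argument.
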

\begin{proof}
Note that if we restrict $w=u - I_0^{\mathcal W} u$ to $\overline{\Omega}_k$ then
$\mathcal{H}_k w= \mathcal{H}_k u - I_0^{\mathcal W} u$ as $I_0^{\mathcal{W}}u$ is discrete harmonic, thus by the fact that 
$\mathcal{H}_k$ and $I-\mathcal{H}_k$ are   $a_{|\Omega_k}$  orthogonal projection in $V_h(\Omega_k)$, cf. Definition~\ref{def:DHE},
we have
\begin{eqnarray*}
 a_{|\Omega_k}(w,w)&=&a_{|\Omega_k}(w-\mathcal{H}_kw,w-\mathcal{H}_kw)+
  a_{|\Omega_k}(\mathcal{H}_kw,\mathcal{H}_kw) \\
  &\leq& a_{|\Omega_k}(u,u)+
  a_{|\Omega_k}(\mathcal{H}_kw,\mathcal{H}_kw)
\end{eqnarray*}
Thus it remains to estimate $a_{|\Omega_k}(\mathcal{H}_kw,\mathcal{H}_kw) $. By
Lemma~\ref{lem:L_Estimate} and the fact that $H_k w=w$ on $\partial\Omega_k$ we get
$$
 a_{|\Omega_k}(\mathcal{H}_kw,\mathcal{H}_kw) \preceq h\sum_{\mathcal{F}_{k l}\subset\partial\Omega_k} b_{\mathcal{F}_{k l}}(w,w).
$$

Let us now consider one such face $\mathcal{F}_{k l}\subset \partial \Omega_k$. We have $w=u-I_{\mathcal{W}}u - \Pi^{\mathcal{F}_{k l}}_{m_{k l}}(u-I_{\mathcal{W}}u)$ on the face, cf.~(\ref{eq:def-wire-coarse-int-op}).
Using Lemma~\ref{lem:EigLemma} it follows that
 \begin{eqnarray*}
  b_{\mathcal{F}_{k l}}(w,w) &=&
  \|(I-\Pi^{\mathcal{F}_{k l}}_{m_{k l}})(u-I_{\mathcal{W}}u)\|_{b_{\mathcal{F}_{k l}}}^2
 \preceq  \frac{1}{\lambda_{\mathcal{F}_{k l}}^{m_{k l}+1}}
 |(I-\Pi^{\mathcal{F}_{k l}}_{m_{k l}})(u-I_{\mathcal{W}}u)|_{a_{\mathcal{F}_{k l}}}^2
 \\
 &\preceq&  \frac{1}{\lambda_{\mathcal{F}_{k l}}^{m_{k l}+1}}
 |u-I_{\mathcal{W}}u|_{a_{\mathcal{F}_{k l}}}^2
 \end{eqnarray*}
Here $\|u\|_{b_{\mathcal{F}_{k l}}}^2:=b_{\mathcal{F}_{k l}}(u,u)$ and $|u|_{a_{\mathcal{F}_{k l}}}^2:=a_{\mathcal{F}_{k l}}(u,u)$.
Since by Definition~\ref{def:WireB-interpolant} $(I_{\mathcal{W}}u)_{|\mathcal{F}_{k l}}$ is orthogonal to $(u-I_{\mathcal{W}}u)_{|\mathcal{F}_{k l}} \in V_h^0(\mathcal{F}_{k l})$ with respect to the bilinear form $a_{\mathcal{F}_{k l}}(\cdot,\cdot)$, we have
 $$
  |u-I_{\mathcal{W}}u|_{a_{\mathcal{F}_{k l}}}^2 \leq |u|_{a_{\mathcal{F}_{k l}}}^2.
 $$
From the last two estimates, it follows that
\begin{equation} \label{eq:proof_lem_est_wire_inter:3}
h b_{\mathcal{F}_{k l}}(w,w) \preceq \frac{h}{\lambda_{\mathcal{F}_{k l}}^{m_{k l}+1}}
  |u|_{a_{\mathcal{F}_{k l}}}^2 \preceq  \frac{h}{\lambda_{\mathcal{F}_{k l}}^{m_{k l}+1}}\sum_{\tau_t \in \mathcal{T}_h(\mathcal{F}_{k l})}
 \sum_{x,y\in \partial \tau_t} \overline{\alpha}_t|u(x)-u(y)|^2.
\end{equation}
Here the last sum is over all pairs of vertices of a 2D face element $\tau_t$.
Using the definition of $\overline{\alpha}_t$ and the fact that
$|u|_{H^1(\tau)}^2\asymp \mathrm{diam}(\tau)\sum_{x,y \in \partial \tau}|u(x)-u(y)|^2$ (here $x,y$ are vertices of 3D element $\tau$)
 we get
\begin{equation} \label{eq:proof_lem_est_wire_inter}
h b_{\mathcal{F}_{k l}}(w,w) \preceq  \frac{1}{\lambda_{\mathcal{F}_{k l}}^{m_{k l}+1}}\left( a_{|\Omega_k}(u,u)+  a_{|\Omega_l}(u,u)\right).
\end{equation}
Finally, summing over the faces yields that
\begin{equation}\label{eq:proof_lem_est_wire_inter:2}
 a_{|\Omega_k}(u - I_0^{\mathcal W} u,u - I_0^{\mathcal W} u)\preceq 
 \sum_{\mathcal{F}_{k l}\subset\partial\Omega_k}  \frac{1}{\lambda_{\mathcal{F}_{k l}}^{m_{k l}+1}}
 \left( a_{|\Omega_k}(u,u)+  a_{|\Omega_l}(u,u)\right),
\end{equation}
and then summing  the subdomains ends the proof.
\end{proof}

For the next lemma we need a partition of unity. 
We need a discrete version of a partition of unity, i.e we define $\theta_i$ is a continuous function which is piecewise linear on ${\mathcal T}_h$ 
such that:
\begin{equation} \label{eq:part-unity}
\theta_i(x)=
\left\{
\begin{array}{ll}
1\quad&\forall x\in\Omega_{i,h} \\
\frac{1}{N_x}  \quad&\forall x\in\partial \Omega_{i,h}\setminus \partial\Omega_h \\
0\quad& \textrm{otherwise}\\
\end{array}.
\right.
\end{equation}
where $N_x$ is the number of suddomains $\Omega_j$ such that $x\in \overline{\Omega}_{j,h}$. As an example, 
$N_x=2$ for any nodal point $x$ on a subdomain face.

\begin{lem}\label{lem:wc-loc-est}
 Let the wire basket based coarse space interpolator $I_0^{\mathcal W}$ be defined in (\ref{eq:def-wire-coarse-int-op}) and
 let $v_k=I_h(\theta_k (u - I_0^{\mathcal W} u))$ for any $u\in V_h$ and $I_h:C(\overline{\Omega})\rightarrow V^h$ - the standard nodal 
 piecewise linear 
 interpolant. Then
\begin{equation}
  a(v_k,v_k) \preceq \left(1+
  \max_{\mathcal{F}_{k l} \subset \partial\Omega_k} \frac{1}{\lambda_{\mathcal{F}_{k l}}^{m_{k l}+1}}
    \right)
  \sum_{\partial\Omega_l\cap \partial \Omega_k=\mathcal{F}_{k l}}
  a_{|\Omega_l}(u,u).
\end{equation}
The last sum is taken over all subdomains which have a common face to $\Omega_k$.
\end{lem}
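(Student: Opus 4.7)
The plan is to split $a(v_k,v_k)$ according to whether a fine element $\tau$ lies strictly inside $\Omega_k$ or in the boundary layer consisting of all elements touching $\partial\Omega_k$ from either side. Write $w:=u-I_0^{\mathcal W}u$. On every element $\tau\subset\Omega_k$ whose vertices are all interior nodes of $\Omega_k$, the partition of unity satisfies $\theta_k\equiv 1$ at those vertices, hence $v_k=w$ on $\tau$. The interior contribution is therefore bounded by $a_{|\Omega_k}(w,w)$, and the face-by-face reasoning already carried out inside the proof of Lemma~\ref{lem:wc-int-est}, namely the step $a_{|\Omega_k}(w,w)\le a_{|\Omega_k}(u,u)+a_{|\Omega_k}(\mathcal{H}_k w,\mathcal{H}_k w)$ combined with (\ref{eq:proof_lem_est_wire_inter:2}), already controls this by $\bigl(1+\max_{\mathcal{F}_{kl}\subset\partial\Omega_k}(\lambda_{\mathcal{F}_{kl}}^{m_{kl}+1})^{-1}\bigr)\sum_l a_{|\Omega_l}(u,u)$, which is exactly the shape demanded by the lemma.

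The boundary layer is the main work. I would first apply the local inverse inequality together with the discrete equivalence of the $L^2$-norm on each $\tau$, exactly as in the proof of Lemma~\ref{lem:L_Estimate}, to get $\sum_{\tau\text{ b.l.}}\alpha_\tau|v_k|_{H^1(\tau)}^2\preceq h\sum_{x\in\partial\Omega_{k,h}}\overline{\alpha}_x|v_k(x)|^2$. Since $0\le\theta_k\le 1$ gives $|v_k(x)|\le|w(x)|$ at every node, and since $I_0^{\mathcal W}u=u$ on the wire basket so that $w$ vanishes on $\mathcal{W}_h$, the nodal sum rearranges, face by face, into $h\sum_{\mathcal{F}_{kl}\subset\partial\Omega_k}b_{\mathcal{F}_{kl}}(w,w)$. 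I would then reuse the face eigenvalue estimate~(\ref{eq:proof_lem_est_wire_inter}) from the proof of Lemma~\ref{lem:wc-int-est}, which provides $hb_{\mathcal{F}_{kl}}(w,w)\preceq(\lambda_{\mathcal{F}_{kl}}^{m_{kl}+1})^{-1}\bigl(a_{|\Omega_k}(u,u)+a_{|\Omega_l}(u,u)\bigr)$, and sum over the faces of $\partial\Omega_k$. Combining this with the interior estimate yields the claim.

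The main obstacle I expect is the cross-interface nature of $v_k$: because the partition of unity leaves $v_k$ nonzero on every common face $\mathcal{F}_{kl}$, the elements sitting inside neighboring subdomains $\Omega_l$ still contribute to $a(v_k,v_k)$. This is what forces the appearance of $a_{|\Omega_l}(u,u)$ for every face neighbor of $\Omega_k$, and why $\overline{\alpha}_x$, taken as the maximum of $\alpha_\tau$ over all $\tau$ meeting $x$, is the correct weight in the weighted inverse-inequality step; using any one-sided coefficient would let the contrast across $\mathcal{F}_{kl}$ leak into the final constant. Once this observation is in place, no ingredients beyond those already used in Lemmas~\ref{lem:L_Estimate} and~\ref{lem:wc-int-est} are needed.
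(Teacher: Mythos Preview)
Your overall strategy matches the paper's, but there is a gap in the boundary-layer step. The inverse-inequality reduction you write,
\[
\sum_{\tau\text{ b.l.}}\alpha_\tau|v_k|_{H^1(\tau)}^2\preceq h\sum_{x\in\partial\Omega_{k,h}}\overline{\alpha}_x|v_k(x)|^2,
\]
is not correct as stated. For a boundary-layer element $\tau\subset\Omega_k$, some vertices of $\tau$ lie in $\Omega_{k,h}$ (interior nodes), and at those nodes $v_k=w$, which is not zero in general. After the inverse inequality and discrete $L^2$ equivalence you obtain a sum over \emph{all} vertices of boundary-layer elements, not only those on $\partial\Omega_{k,h}$; the interior-vertex contributions cannot be dropped. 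Your appeal to the proof of Lemma~\ref{lem:L_Estimate} is misleading here: in that proof the function $\hat{u}$ is zero at interior nodes, and it is precisely this property that fails for $v_k$ on the $\Omega_k$-side layer.

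The fix is the one the paper uses: on $\Omega_k$ apply a triangle inequality, $a_{|\Omega_k}(v_k,v_k)\preceq a_{|\Omega_k}(w,w)+a_{|\Omega_k}(w-v_k,w-v_k)$. The first term you already control via (\ref{eq:proof_lem_est_wire_inter:2}). The second term is now genuinely in the scope of Lemma~\ref{lem:L_Estimate}, since $w-v_k$ vanishes at every interior node of $\Omega_k$ (there $\theta_k=1$) and equals $(1-\theta_k)w$ on $\partial\Omega_{k,h}$, hence $|w-v_k|\le|w|$ there and it is zero on $\mathcal{W}_h$. This recovers the bound $h\sum_{\mathcal{F}_{kl}}b_{\mathcal{F}_{kl}}(w,w)$, and you finish via (\ref{eq:proof_lem_est_wire_inter}) exactly as you describe. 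For the neighbor-side boundary-layer elements in $\Omega_l$ your argument is fine as written, since there $v_k$ really does vanish at all nodes of $\overline{\Omega}_{l,h}\setminus\mathcal{F}_{kl,h}$.
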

\begin{proof}
Let  $w=u - I_0^{\mathcal W} u$.
Note that
\begin{equation} \label{eq:proof_lem_loc_est}
v_k(x)=I_h \theta_k w(x)=
\left\{
\begin{array}{lr}
w(x) \quad x\in \Omega_{k,h} \\
\frac12 w(x) \quad x\in \mathcal{F}_{k l, h},  \ \mathcal{F}_{k l} \subset \partial\Omega_k\\
0 \quad x \in \mathcal{W}_h\cap\partial\Omega_k\\
0 \quad \mathrm{otherwise}
\end{array}
\right.
\end{equation}
Thus
\begin{equation} \label{eq:proof_lem_loc_est:2}
 a(v_k,v_k)=a_{|\Omega_k}(v_k,v_k)+ \sum_{\mathcal{F}_{k l}\subset \partial\Omega_k}a_{|\Omega_l}(v_k,v_k)
\end{equation}

We first estimate the second term, that is the sum of the face terms.
Note that $v_{k|\Omega_l}$ is zero at the interior nodes $\Omega_{l,h}$ and
the boundary nodes $\partial\Omega_{l,h}\setminus \mathcal{F}_{k l,h}$.
Thus by Lemma~\ref{lem:L_Estimate} and (\ref{eq:proof_lem_loc_est}) we have
$$
 \sum_{\mathcal{F}_{k l}\subset \partial\Omega_k}
   a_{|\Omega_l}(v_k,v_k) \preceq h\sum_{\mathcal{F}_{k l}\subset\partial\Omega_k} b_{\mathcal{F}_{k l}}(v_k,v_k)= \frac{h}{4}\sum_{\mathcal{F}_{k l}\subset\partial\Omega_k} b_{\mathcal{F}_{k l}}(w,w).
$$
This term has already been estimated in the proof of Lemma~\ref{lem:wc-int-est}, cf.  (\ref{eq:proof_lem_est_wire_inter}), that is
$$
h b_{\mathcal{F}_{k l}}(w,w)\preceq
 \frac{1}{\lambda_{\mathcal{F}_{k l}^{m_{k l}+1}}}\left( a_{|\Omega_k}(u,u)+  a_{|\Omega_l}(u,u)\right).
$$

We now estimate the first term in (\ref{eq:proof_lem_loc_est:2}), that is the restriction of the bilinear form to the $\Omega_k$. 
By a triangle inequality, we can write
$$
 a_{|\Omega_k}(v_k,v_k)\preceq
a_{|\Omega_k}(w,w)+ a_{|\Omega_k}(w-v_k,w-v_k)
$$
The first term has already been estimated in the proof of Lemma~\ref{lem:wc-int-est}, cf. (\ref{eq:proof_lem_est_wire_inter:2}).
Also, note that $(w-v_k)(x)$ equals either $\frac12 w(x)$ when $x$ is a face node, or
zero when $x\in\partial\Omega_k\cap \mathcal{W}_h$ and $x\in\Omega_{k,h}$.
By Lemma~\ref{lem:L_Estimate} and (\ref{eq:proof_lem_loc_est}) we thus get
$$
 a_{|\Omega_k}(w-v_k,w-v_k)\preceq h\sum_{\mathcal{F}_{k l}\subset\Omega_l} b_{\mathcal{F}_{k l}}(w-v_k,w-v_k)=
   \frac{h}{4} \sum_{\mathcal{F}_{k l}\subset\Omega_l} b_{\mathcal{F}_{k l}}(w,w).
$$
Again, this term has been estimated in the proof of Lemma~\ref{lem:wc-int-est}, cf.  (\ref{eq:proof_lem_est_wire_inter}). 
Summing all those estimates ends the proof.
\end{proof}

Analogous to the wire basket case, we now define the vertex based coarse space interpolator $I_0^{\mathcal V}:V_h \rightarrow V_{\mathcal V}\subset V_h$ as
\begin{eqnarray}\label{eq:def-vertex-coarse-int-op}
 I_0^{\mathcal V}u&:=& I_{\mathcal V} u +
 \sum_{\mathcal{F}_{k l}\in \mathcal{F}}  \Pi^{\mathcal{F}_{k l,I}}_{n_{k l}} u
  +\sum_{\mathcal{E}_k\in \mathcal{E}}   \Pi_{m_k}^{\mathcal{E}_k}(u - I_{\mathcal V} u),
\end{eqnarray}
where $\Pi^{\mathcal{F}_{k l,I}}_{n_{k l}}:V_h \rightarrow
V_{\mathcal{F}_{k l},I}^{en} \subset V_h$ and
$\Pi^{\mathcal{E}_k}_{m_k}:V_h \rightarrow
V_{\mathcal{E}_k}^{en} \subset V_h$, 
are defined as follows,
\begin{eqnarray*}
 \Pi_{n_{k l}}^{\mathcal{F}_{k l,I}}(u)&:=&
 \sum_{\mathcal{F}_{k l }\in \mathcal{F}}
 \sum_{i=1}^{n_{k l}}
 b_{\mathcal{F}_{k l}} (u|_{\mathcal{F}_{k l}},\xi^i_{\mathcal{F}_{k l,I}})\xi^i_{\mathcal{F}_{k l,I}},\\
 \Pi^{\mathcal{E}_k}_{m_k} (u)&:=&
 \sum_{\mathcal{E}_k\in \mathcal{E}}
 \sum_{i=1}^{m_k}
 b_{\mathcal{E}_k}(u|_{\mathcal{E}_k},\xi^i_{\mathcal{E}_k})\xi^i_{\mathcal{E}_k},
\end{eqnarray*}
cf. also (\ref{eq:VC-enrich}), (\ref{eq:def-b-face}) and (\ref{eq:def-b-edge}).

We have the following lemma.
\begin{lem}\label{lem:vc-int-est}
Let the vertex based coarse space interpolator $I_0^{\mathcal V}$ be defined in (\ref{eq:def-vertex-coarse-int-op}), then for any $u \in V_h$
\begin{equation}
  a(u - I_0^{\mathcal V} u,u - I_0^{\mathcal V} u) \preceq
   \left(1+
  \max\left\{
  \max_{\mathcal{F}_{k l} \in \mathcal{F}} \frac{1}{\lambda_{\mathcal{F}_{k l,I}}^{n_{k l}+1}},
  \max_{\mathcal{E}_k \in \mathcal{E}} \frac{1}{\lambda_{\mathcal{E}_k}^{m_k+1}}
  \right\}
  \right)
  a(u,u),
\end{equation}
where $\lambda_{\mathcal{F}_{k l}}^{n_{kl}+1}$ and $\lambda_{\mathcal{E}_k}^{m_k+1}$ are 
respectively the $(n_{kl}+1)$th and the $(m_k+1)$th smallest eigenvalues of the generalized eigenvalue problems in Definitions~\ref{defGeFI} and~\ref{defGeE}. 
\end{lem}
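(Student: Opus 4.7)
The strategy parallels Lemma~\ref{lem:wc-int-est}. Set $w := u - I_0^{\mathcal V} u$. Since $I_0^{\mathcal V}u$ is piecewise discrete harmonic, the $a_{|\Omega_k}$-orthogonal splitting $w = (u - \mathcal H_k u) + \mathcal H_k w$ reduces the task on each $\Omega_k$ to bounding $a_{|\Omega_k}(\mathcal H_k w, \mathcal H_k w)$; the bubble part contributes at most $a_{|\Omega_k}(u,u)$. The crucial new observation is that $w$ vanishes at every subdomain vertex on $\partial\Omega_k$: $I_{\mathcal V}u$ matches $u$ at vertices, while all edge and face-interior enrichments are zero there. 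Consequently one is in the third case of Lemma~\ref{lem:L_Estimate}, which yields
\[
 a_{|\Omega_k}(\mathcal H_k w, \mathcal H_k w) \preceq h^2 \sum_{\mathcal E_i \subset \partial\Omega_k} b_{\mathcal E_i}(w,w) + h \sum_{\mathcal F_{k l}\subset\partial\Omega_k} b_{\mathcal F_{k l}}(w,w).
\]

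I would then estimate the edge and face $b$-terms separately. Along an edge $\mathcal E_i$ only $I_{\mathcal V}u$ and its edge projection contribute, because face-interior eigenfunctions are zero on edges and the other edge eigenfunctions are supported on disjoint edges; hence $w|_{\mathcal E_i} = (I-\Pi^{\mathcal E_i}_{m_i})(u - I_{\mathcal V}u)|_{\mathcal E_i}$. Applying Lemma~\ref{lem:EigLemma} to the edge eigenvalue problem (\ref{eq:GeE}), together with the $a_{\mathcal E_i}$-orthogonality of $I_{\mathcal V}u$ on $\mathcal E_i$ from Definition~\ref{def:vertex-interpolant}, gives $b_{\mathcal E_i}(w,w) \preceq (\lambda_{\mathcal E_i}^{m_i+1})^{-1}\,|u|^2_{a_{\mathcal E_i}}$.

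For the face term I split the nodal sum defining $b_{\mathcal F_{k l}}(w,w)$ into nodes on $\partial \mathcal F_{k l}$ and interior nodes of $\mathcal F_{k l}$. The boundary contribution collapses into edge sums via the trivial bound $\sum_{x \in \mathcal E_{i,h}\setminus \partial \mathcal E_i}\overline\alpha_x|w(x)|^2 \leq h\,b_{\mathcal E_i}(w,w)$ and is thus already controlled by the edge estimate above. For the interior nodes, introduce the truncation $\psi \in V_h^0(\mathcal F_{k l})$ equal to $u$ at interior face nodes and zero on $\partial \mathcal F_{k l}$. Since every $\xi^i_{\mathcal F_{k l,I}}$ lies in $V_h^0(\mathcal F_{k l})$, the defining sums of $\Pi^{\mathcal F_{k l,I}}_{n_{k l}}$ only see interior face nodal values, so $\Pi^{\mathcal F_{k l,I}}_{n_{k l}} u = \Pi^{\mathcal F_{k l,I}}_{n_{k l}} \psi$, and the interior restriction of $w$ coincides with $(I-\Pi^{\mathcal F_{k l,I}}_{n_{k l}})\psi \in V_h^0(\mathcal F_{k l})$. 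Lemma~\ref{lem:EigLemma} (usable because $n_{k l}\geq 1$ exceeds the zero-eigenvalue index) bounds this by $(\lambda_{\mathcal F_{k l,I}}^{n_{k l}+1})^{-1} |\psi|^2_{a_{\mathcal F_{k l,I}}}$, and since $\psi = u$ on every triangle of $\mathcal F_{k l}^I$, this is at most $(\lambda_{\mathcal F_{k l,I}}^{n_{k l}+1})^{-1} |u|^2_{a_{\mathcal F_{k l}}}$.

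The main obstacle is the final step of converting the lower-dimensional seminorms into subdomain energies with the correct powers of $h$. Using the nodal-difference equivalence $|u|^2_{H^1(\tau)}\asymp h\sum_{x,y\in\partial\tau}|u(x)-u(y)|^2$ on a 3D tetrahedron exactly as in the proof of Lemma~\ref{lem:wc-int-est}, I obtain $h\,|u|^2_{a_{\mathcal F_{k l}}} \preceq a_{|\Omega_k}(u,u)+a_{|\Omega_l}(u,u)$ and, applied to a tetrahedron containing a fine edge on $\mathcal E_i$, $h^2\,|u|^2_{a_{\mathcal E_i}} \preceq \sum_{\Omega_j:\,\mathcal E_i\subset\partial\overline\Omega_j} a_{|\Omega_j}(u,u)$; the extra factor $h$ on the edge side is exactly what the $h^2$ coefficient of the edge sum in Lemma~\ref{lem:L_Estimate} was designed to absorb. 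Substituting these estimates, summing over the faces and edges of $\partial\Omega_k$, and then over all subdomains using the bounded overlap of subdomains sharing a face or edge, produces the claimed constant $1+\max\{\max_{\mathcal F_{k l}}(\lambda^{n_{k l}+1}_{\mathcal F_{k l,I}})^{-1},\,\max_{\mathcal E_k}(\lambda^{m_k+1}_{\mathcal E_k})^{-1}\}$.
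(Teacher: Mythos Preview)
Your proposal is correct and follows essentially the same route as the paper: the $a_{|\Omega_k}$-orthogonal splitting into bubble plus harmonic, Lemma~\ref{lem:L_Estimate} to reduce the harmonic part to edge and face $b$-norms, Lemma~\ref{lem:EigLemma} on each piece (your truncation $\psi$ is exactly the paper's $\hat u$, with the same observation $\Pi^{\mathcal F_{kl,I}}_{n_{kl}}u=\Pi^{\mathcal F_{kl,I}}_{n_{kl}}\hat u$ and $a_{\mathcal F_{kl,I}}(\hat u,\hat u)=a_{\mathcal F_{kl,I}}(u,u)\le a_{\mathcal F_{kl}}(u,u)$), and the nodal-difference conversion to the 3D energy. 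One small simplification: because $b_{\mathcal F_{kl}}$ in (\ref{eq:def-b-face}) sums only over the nodes $\mathcal F_{kl,h}$ of the \emph{open} face, your boundary/interior split of that sum is unnecessary --- the edge nodes on $\partial\mathcal F_{kl}$ never enter $b_{\mathcal F_{kl}}$ and are already captured by the separate $b_{\mathcal E_i}$ terms coming from Lemma~\ref{lem:L_Estimate}.
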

\begin{proof}
Let $w=u - I_0^{\mathcal V} u$. In the same way as in the proof of Lemma~\ref{lem:wc-int-est}, we get
\begin{eqnarray*}
 a_{|\Omega_k}(w,w)
  &\leq& a_{|\Omega_k}(u,u)+
  a_{|\Omega_k}(\mathcal{H}_kw,\mathcal{H}_kw)
\end{eqnarray*}
Next we bound $a_{|\Omega_k}(\mathcal{H}_kw,\mathcal{H}_kw) $.
By Lemma~\ref{lem:L_Estimate}, cf. (\ref{eq:lem:L_Estimate}), and  (\ref{eq:def-b-face}) and (\ref{eq:def-b-edge}). we get
\begin{eqnarray*}
 a_{|\Omega_k}(\mathcal{H}_kw,\mathcal{H}_kw) & \preceq&
  h^2\sum_{\mathcal{E}_j\subset\Omega_k} b_{\mathcal{E}_j}(w,w)+
  h\sum_{\mathcal{F}_{k l}\subset\Omega_k} b_{\mathcal{F}_{k l}}(w,w)
\end{eqnarray*}
Note that by (\ref{eq:vertex-coarse-sp}) and  Definition~\ref{def:vertex-interpolant} we have $w_{|\mathcal{E}_j}=u-  I_{\mathcal V} u - \Pi_{m_j}^{\mathcal{E}_j}(u - I_{\mathcal V} u)$ for any edge $\mathcal{E}_j \subset \mathcal{W}\cap\partial\Omega_k$, and $w_{|\mathcal{F}_{k l}}=u- \Pi_{m_{k l}}^{\mathcal{F}_{k l,I}}(u)$ for any face $\mathcal{F}_{k l} \subset \mathcal{W}\cap\partial\Omega_k$.

Now, consider the term $b_{\mathcal{E}_{j}}(w,w)$ related to an edge $\mathcal{E}_j$.
By Lemma~\ref{lem:EigLemma} we get
 \begin{eqnarray*}
  b_{\mathcal{E}_j}(w,w)
 &=&
  \|(I-\Pi^{\mathcal{E}_j}_{m_j})(u-I_{\mathcal{V}}u)\|_{b_{\mathcal{E}_j}}^2
 \preceq  \frac{1}{\lambda_{\mathcal{E}_j}^{m_j+1}}
 |(I-\Pi^{\mathcal{E}_j}_{m_j})(u-I_{\mathcal{V}}u)|_{a_{\mathcal{E}_j}}^2
 \\
 &\preceq&  \frac{1}{\lambda_{\mathcal{E}_j}^{m_j+1}}
 |u-I_{\mathcal{V}}u|_{a_{\mathcal{E}_j}}^2
 \preceq \frac{1}{\lambda_{\mathcal{E}_j}^{m_j+1}}
 |u|_{a_{\mathcal{E}_j}}^2.
 \end{eqnarray*}
  Here $\|u\|_{b_{\mathcal{E}_j}}^2:=b_{\mathcal{E}_j}(u,u)$ and
  $|u|_{a_{\mathcal{E}_j}}^2:=a_{\mathcal{E}_j}(u,u)$.
We also used the fact that $(I_{\mathcal{V}}u)_{\mathcal{E}_j}$ is orthogonal to $(u-I_{\mathcal{V}}u)_{|\mathcal{E}_j} \in V_h^0(\mathcal{E}_{j}^0)$ with respect to the bilinear form $a_{\mathcal{E}_j}(\cdot,\cdot)$, cf. Definition~\ref{def:vertex-interpolant}.

Further, using the fact that, for $u$ linear, $|u|_{H^1(\tau_e)}^2$ is equivalent to $h^{-1}|u(x)-u(y)|^2$ ($x,y$ are the ends of the 1D element $\tau_e$), we get
$$
 h^2b_{\mathcal{E}_j}(w,w)\preceq \frac{h^2}{\lambda_{\mathcal{E}_j}^{m_j+1}}
 |u|_{a_{\mathcal{E}_j}}^2 \preceq \frac{h}{\lambda_{\mathcal{E}_j}^{m_j+1}}\sum_{\tau_e \in \mathcal{T}_h(\mathcal{E}_j)}
 \sum_{x,y\in \partial \tau_e} \overline{\alpha}_e|u(x)-u(y)|^2.
$$
Here the last sum is over the ends $x,y$ of an 1D edge element $\tau_e$.
Note that $h|u(x)-u(y)|^2 \preceq \int_\tau |\nabla u|^2\: dx $ if $x,y$ are vertices of $\tau\in \mathcal{T}_h$.
Thus  we get
$$
 h^2
 b_{\mathcal{E}_j}(w,w)\preceq 
\sum_{\partial\tau \cap \mathcal{E}_j=\overline{\tau}_e \subset\mathcal{E}_j}
\int_{\tau}\alpha|\nabla u|^2\;d x.
$$
where the sum is over all 3D fine elements such that one of its edges is contained in $\mathcal{E}_j$.

Now, consider the term $b_{\mathcal{F}_{k l}}(w,w)$ related to a face $\mathcal{F}_{k l}$.
Note that $u_{|\partial\mathcal{F}_{k l}}$ does not have to be equal to zero in general but if we define a function
$\hat{u}$ such that $\hat{u}(x)=u(x)$ $x\in\mathcal{F}_{kl,h}$ and $\hat{u}(x)=0$
for $x\in \partial\mathcal{F}_{k l,h}$, then we have
\begin{equation}
 b_{\mathcal{F}_{k l}}(u,u)=b_{\mathcal{F}_{k l}}(\hat{u},\hat{u}), \qquad
 a_{\mathcal{F}_{k l,I}}(u,u)=a_{\mathcal{F}_{k l,I}}(\hat{u},\hat{u}),
\end{equation}
cf. (\ref{eq:def-b-face}) and (\ref{eq:def-a-f_kl-I}).
Thus we see that $\Pi^{\mathcal{F}_{k l,I}}_{n_{k l}}u=\Pi^{\mathcal{F}_{k l,I}}_{n_{k l}}\hat{u}$ and
we can apply Lemma~\ref{lem:EigLemma} replacing $u$ with $\hat{u}$ and get
 \begin{eqnarray*}
  b_{\mathcal{F}_{k l}}(w,w) &=&
  \|(I-\Pi^{\mathcal{F}_{k l,I}}_{n_{k l}})u\|_{b_{\mathcal{F}_{k l}}}^2=  \|(I-\Pi^{\mathcal{F}_{k l,I}}_{n_{k l}})\hat{u}\|_{b_{\mathcal{F}_{k l}}}^2
 \preceq  \frac{1}{\lambda_{\mathcal{F}_{k l}}^{n_{k l}+1}}
 |(I-\Pi^{\mathcal{F}_{k l}}_{n_{k l}})\hat{u}|_{a_{\mathcal{F}_{k l,I}}}^2
 \\
 &\preceq&  \frac{1}{\lambda_{\mathcal{F}_{k l,I}}^{n_{k l}+1}}
 |\hat{u}|_{a_{\mathcal{F}_{k l,I}}}^2= \frac{1}{\lambda_{\mathcal{F}_{k l,I}}^{n_{k l}+1}}
 |u|_{a_{\mathcal{F}_{k l,I}}}^2 \leq \frac{1}{\lambda_{\mathcal{F}_{k l,I}}^{n_{k l}+1}}
 |u|_{a_{\mathcal{F}_{k l}}}^2. 
 \end{eqnarray*}
  Here
  $|u|_{a_{\mathcal{F}_{k l,I}}}^2:=a_{\mathcal{F}_{k l,I}}(u,u)$.
  The last inequality follows from the fact that 
$$
  a_{\mathcal{F}_{k l,I}}(v,v)\leq a_{\mathcal{F}_{k l,}}(v,v) \qquad \forall v\in V_h^0(\mathcal{F}_{k l})
$$
cf.  (\ref{eq:aFkl}) and (\ref{eq:def-a-f_kl-I}).
Further, analogously as in the proof of Lemma~\ref{lem:wc-int-est}, cf.
(\ref{eq:proof_lem_est_wire_inter:3}) and (\ref{eq:proof_lem_est_wire_inter}), we get
$$
  hb_{\mathcal{F}_{k l}}(w,w)\preceq
 \frac{1}{\lambda_{\mathcal{F}_{k l,I}^{n_{k l}+1}}}\left( a_{|\Omega_k}(u,u)+  a_{|\Omega_l}(u,u)\right).
$$
Finally, by summing over the edges and faces, and then over the subdomains we end the proof.
\end{proof}
\begin{lem}\label{lem:vc-loc-est}
Let the vertex based coarse space interpolator $I_0^{\mathcal V}$ be defined in (\ref{eq:def-vertex-coarse-int-op}), and $v_k=I_h(\theta_k (u - I_0^{\mathcal V}u))$ for any $u\in V_h$. Then
\begin{equation}
  a(v_k,v_k) \preceq  \left(1+
  \max\left\{
  \max_{\mathcal{F}_{k l} \subset \partial\Omega_k} \frac{1}{\lambda_{\mathcal{F}_{k l,I}}^{n_{k l}+1}},
  \max_{\mathcal{E}_k \subset \partial\Omega_k} \frac{1}{\lambda_{\mathcal{E}_k}^{m_k+1}}
  \right\}
   \right)
    \sum_{\mathcal{E}_s\subset \partial\Omega_l\cap \partial \Omega_k}
   a_{|\Omega_l}(u,u).
\end{equation}
The last sum is taken over all subdomains which share an edge with $\Omega_k$.
\end{lem}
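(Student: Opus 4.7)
The plan is to mirror the structure of Lemma \ref{lem:wc-loc-est} but with the extra bookkeeping required by the fact that the vertex-based correction $w := u - I_0^{\mathcal{V}} u$ is only guaranteed to vanish at subdomain vertices (not along the whole wire basket). Accordingly, the support of $v_k = I_h(\theta_k w)$ now extends into every subdomain $\Omega_l$ that shares at least an edge with $\Omega_k$, and not only those sharing a face. A direct inspection of the partition of unity \eqref{eq:part-unity} gives the nodal values:
\begin{equation*}
v_k(x)=\begin{cases} w(x) & x\in \Omega_{k,h},\\ \tfrac12 w(x) & x\in \mathcal{F}_{kl,h}, \ \mathcal{F}_{kl}\subset\partial\Omega_k,\\ \tfrac{1}{N_x}\,w(x) & x\in \mathcal{E}_{j,h}\setminus \mathcal{V}, \ \mathcal{E}_j\subset\partial\Omega_k,\\ 0 & \text{otherwise},\end{cases}
\end{equation*}
so in particular $v_k$ vanishes at every subdomain vertex.

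I would split
\begin{equation*}
 a(v_k,v_k)=a_{|\Omega_k}(v_k,v_k)+\sum_{\mathcal{F}_{kl}\subset\partial\Omega_k} a_{|\Omega_l}(v_k,v_k)+\sum_{\mathcal{E}_j\subset\partial\Omega_k}\ \sum_{\Omega_l\neq\Omega_k:\,\mathcal{E}_j\subset\partial\Omega_l} a_{|\Omega_l}(v_k,v_k).
\end{equation*}
For each neighbour $\Omega_l\neq\Omega_k$, the restriction $v_k|_{\Omega_l}$ is zero at all interior nodes of $\Omega_l$ and at all its vertices, so Lemma \ref{lem:L_Estimate} applied in its strongest form yields
\begin{equation*}
 a_{|\Omega_l}(v_k,v_k) \preceq h\sum_{\mathcal{F}_{ls}\subset\partial\Omega_l\cap\partial\Omega_k} b_{\mathcal{F}_{ls}}(w,w) + h^{2}\sum_{\mathcal{E}_j\subset\partial\Omega_l\cap\partial\Omega_k} b_{\mathcal{E}_j}(w,w),
\end{equation*}
the factors $1/4$ and $1/N_x^{2}$ from the partition of unity being absorbed into $\preceq$.

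For the self-term on $\Omega_k$, I would use the triangle inequality
$ a_{|\Omega_k}(v_k,v_k) \preceq a_{|\Omega_k}(w,w)+a_{|\Omega_k}(v_k-w,v_k-w),$
recognising $a_{|\Omega_k}(w,w)$ as a quantity already bounded inside the proof of Lemma \ref{lem:vc-int-est} by $a_{|\Omega_k}(u,u)+h^{2}\sum_{\mathcal{E}_j\subset\partial\Omega_k}b_{\mathcal{E}_j}(w,w)+h\sum_{\mathcal{F}_{kl}\subset\partial\Omega_k}b_{\mathcal{F}_{kl}}(w,w)$, and noting that $v_k-w$ is zero at all interior nodes of $\Omega_k$ and at all vertices of $\partial\Omega_k$, so the same application of Lemma \ref{lem:L_Estimate} controls it by the same face and edge $b$-terms. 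Thus every contribution is reduced to estimating $h\,b_{\mathcal{F}_{kl}}(w,w)$ and $h^{2}\,b_{\mathcal{E}_j}(w,w)$.

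These are exactly the two quantities already handled inside the proof of Lemma \ref{lem:vc-int-est} using Lemma \ref{lem:EigLemma} for the eigenvalue projections $\Pi^{\mathcal{F}_{kl,I}}_{n_{kl}}$ and $\Pi^{\mathcal{E}_j}_{m_j}$, together with the $a_{\mathcal{F}_{kl}}$-orthogonality and $a_{\mathcal{E}_j}$-orthogonality of $I_{\mathcal{V}}u$ to the relevant zero-boundary subspaces. Those arguments give
\begin{equation*}
 h\,b_{\mathcal{F}_{kl}}(w,w)\preceq \frac{1}{\lambda_{\mathcal{F}_{kl,I}}^{n_{kl}+1}}\bigl(a_{|\Omega_k}(u,u)+a_{|\Omega_l}(u,u)\bigr),
\end{equation*}
\begin{equation*}
 h^{2}\,b_{\mathcal{E}_j}(w,w)\preceq \frac{1}{\lambda_{\mathcal{E}_j}^{m_j+1}}\sum_{\Omega_l:\,\mathcal{E}_j\subset\partial\Omega_l} a_{|\Omega_l}(u,u).
\end{equation*}
Substituting these into the split above, extracting the max of the eigenvalue factors, and recalling that every face contributing to the sum lies in a subdomain sharing (at least) an edge with $\Omega_k$, the right-hand side collapses to the stated bound. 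The main obstacle, which the argument for Lemma \ref{lem:vc-int-est} already resolves, is the face enrichment: because $w|_{\mathcal{F}_{kl}}=u-\Pi^{\mathcal{F}_{kl,I}}_{n_{kl}}u$ has no reason to vanish on $\partial\mathcal{F}_{kl}$, one must pass to the zero-extended $\hat u$ to apply Lemma \ref{lem:EigLemma} and then compare $a_{\mathcal{F}_{kl,I}}$ with the full face form $a_{\mathcal{F}_{kl}}$ before recovering the $\Omega_k,\Omega_l$ energies via the inverse inequality; reusing this step verbatim keeps the present proof short.
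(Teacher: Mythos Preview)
Your proposal is correct and follows essentially the same route as the paper's proof: the same nodal description of $v_k$, the same splitting of $a(v_k,v_k)$ into the self-term plus contributions from face- and edge-neighbours, the same reduction via Lemma~\ref{lem:L_Estimate} to the quantities $h\,b_{\mathcal{F}_{kl}}(w,w)$ and $h^{2}\,b_{\mathcal{E}_j}(w,w)$, and the same appeal to the estimates already established in the proof of Lemma~\ref{lem:vc-int-est}. Your write-up is in fact slightly more explicit about the $\hat u$ device needed for the face term, which the paper leaves implicit by referring back to Lemma~\ref{lem:vc-int-est}.
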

\begin{proof}
Let $w=u-I_0^{\mathcal V}u$, then we have $I_h\theta_k w$ equal to $w$ at interior nodes $\Omega_{k,h}$, $\frac12 w$ at the nodes $\mathcal{F}_{k l,h}$ on each face $\mathcal{F}_{k l}$ of $\Omega_k$, $\frac{1}{n(\mathcal{E}_i)}w$ at the nodes $\mathcal{E}_{i,h}$ on each edge $\mathcal{E}_i$  of $\Omega_k$, and zero at all remaining nodal points of $\Omega_h$. Here $n(\mathcal{E}_i)$ is the number of subdomains that share the edge $\mathcal{E}_i$.
As in the proof of Lemma~\ref{lem:wc-loc-est}, we can write
\begin{eqnarray*}
 a(v_k,v_k)&=&a_{|\Omega_k}(v_k,v_k)+
 \sum_{\mathcal{E}_i\subset \partial\Omega_k}a_{|\Omega_i}(v_k,v_k) +
 \sum_{\mathcal{F}_{k l}\subset \partial\Omega_k} a_{|\Omega_l}(v_k,v_k)
\\
&\preceq&a_{|\Omega_k}(v_k,v_k)+
h^2\sum_{\mathcal{E}_i\subset\partial\Omega_k} b_{\mathcal{E}_i}(w,w)+
h\sum_{\mathcal{F}_{k l}\subset\partial\Omega_k} b_{\mathcal{F}_{k l}}(w,w)).
\end{eqnarray*}
The face and the edge terms can be estimated following the lines in the proof of Lemma~\ref{lem:vc-int-est}.

The first term, on the other hand, can be estimated following the lines in the proof of Lemma~\ref{lem:wc-loc-est}, that is using Lemma~\ref{lem:L_Estimate}, as follows:
\begin{eqnarray*}
a_{|\Omega_k}(v_k,v_k)&\preceq &
a_{|\Omega_k}(w,w)+ a_{|\Omega_k}(w-v_k,w-v_k)
\\
&\preceq &
a_{|\Omega_k}(w,w) \\
&& + h^2\sum_{\mathcal{E}_i\subset\partial\Omega_k} b_{\mathcal{E}_i}(v_k-w,v_k-w)+h\sum_{\mathcal{F}_{k l}\subset\partial\Omega_k} b_{\mathcal{F}_{k l}}(v_k-w,v_k-w)
\\
&\preceq &
a_{|\Omega_k}(w,w)
+ h^2\sum_{\mathcal{E}_i\subset\partial\Omega_k} b_{\mathcal{E}_i}(w,w)+h\sum_{\mathcal{F}_{k l}\subset\partial\Omega_k} b_{\mathcal{F}_{k l}}(w,w).
\end{eqnarray*}
Again, these terms can be estimated in the same way as in the proof of Lemma~\ref{lem:vc-int-est}. 

Combining those estimates, we get the proof.
\end{proof}

The next and final lemma, provides estimates for the stability of decomposition for the two preconditioners presented in this paper, which are required in the proof of Theorem \ref{thm:ASM-cond-est}. 

\begin{lem}[Stable Decomposition]\label{lem:StableDecomp}
Let $k=1$ and $k=2$ in the superscript refer to the two coarse spaces: the wire basket based coarse space and the vertex based coarse space, respectively. Then, for all $u\in V_h$ there exists a representation $u=u_0^{(k)}+\sum^N_{i=1} u_i^{(k)}$
such that $u_0^{(k)}\in V_0^{(k)}$, and $u_i^{(k)}\in V_i$, $i=1,\ldots,N$, and
\begin{equation}
 a(u_0^{(k)},u_0^{(k)})+\sum_{i=1}^N a(u_i^{(k)},u_i^{(k)}) \preceq
\left(C_0^{(k)}\right)^2 a(u,u),
\end{equation}
for $k=1, 2$, with
\begin{eqnarray*}
  \left(C_0^{(1)}\right)^2 &=& 1+\max_{\mathcal{F}_{k l} \in \mathcal{F}} \left(\lambda_{\mathcal{F}_{k l}}^{m_{k l}+1}\right)^{-1}
  \\
  \left(C_0^{(2)}\right)^2 &=& 1+
  \max\left\{\max_{\mathcal{F}_{k l} \in \mathcal{F}} (\lambda_{\mathcal{F}_{k l,I}}^{n_{k l}})^{-1},
        \min_{\mathcal{E}_k \in \mathcal{E}} (\lambda_{\mathcal{E}_k}^{m_k})^{-1}\right\}
\end{eqnarray*}
\end{lem}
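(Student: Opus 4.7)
My plan is to construct the splitting directly from the two coarse-space interpolators already built in the preceding subsections. For $k=1$ set $u_0^{(k)} := I_0^{\mathcal W} u$ and for $k=2$ set $u_0^{(k)} := I_0^{\mathcal V} u$; by construction these lie in $V_0^{(k)}$. Writing $w := u - u_0^{(k)}$, I would then define the local components via the partition of unity $\theta_i$ introduced in~(\ref{eq:part-unity}),
\[
 u_i^{(k)} := I_h\bigl(\theta_i\, w\bigr), \qquad i=1,\dots,N.
\]

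First I would verify that this is an admissible decomposition. Since $\sum_i \theta_i(x)=1$ at every node of $\overline{\Omega}_h\setminus\partial\Omega_h$ and $I_h$ is a nodal interpolant, one has $\sum_i I_h(\theta_i w)=w$ nodewise, hence $u = u_0^{(k)}+\sum_{i=1}^N u_i^{(k)}$. I would also check that $u_i^{(k)}\in V_i = V_h^0(\Omega_i')$: by~(\ref{eq:part-unity}) the nodal support of $\theta_i$ lies in $\overline{\Omega}_{i,h}$, so $u_i^{(k)}$ vanishes at every node outside $\overline{\Omega}_i$. Since $\Omega_i'$ is obtained from $\Omega_i$ by adding one element layer, the nodes of $\partial\Omega_i'$ lie strictly outside $\overline{\Omega}_i$, and therefore $u_i^{(k)}$ vanishes on $\partial\Omega_i'$, placing it in $V_i$.

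Next I would estimate the energies. The coarse contribution is bounded by a triangle inequality together with Lemma~\ref{lem:wc-int-est} (for $k=1$) or Lemma~\ref{lem:vc-int-est} (for $k=2$):
\[
 a(u_0^{(k)},u_0^{(k)}) \preceq a(u,u) + a(u - u_0^{(k)}, u - u_0^{(k)}) \preceq \bigl(C_0^{(k)}\bigr)^2 a(u,u).
\]
Each local contribution $a(u_i^{(k)},u_i^{(k)}) = a\bigl(I_h(\theta_i w),I_h(\theta_i w)\bigr)$ is exactly the quantity estimated by Lemma~\ref{lem:wc-loc-est} for $k=1$ and by Lemma~\ref{lem:vc-loc-est} for $k=2$; the right-hand side in each case is $\bigl(C_0^{(k)}\bigr)^2$ times a sum of $a_{|\Omega_l}(u,u)$ taken over the subdomains $\Omega_l$ sharing a face (respectively an edge) with $\Omega_i$.

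Finally I would collapse the double sum $\sum_i \sum_{\Omega_l \sim \Omega_i} a_{|\Omega_l}(u,u)$ into $a(u,u)$ via a standard coloring/finite-overlap argument: because each fixed subdomain is adjacent to only $O(1)$ others, each local contribution $a_{|\Omega_l}(u,u)$ appears on the right-hand side only a bounded number of times. Adding the coarse and local bounds then yields the claimed inequality with exactly the constants $\bigl(C_0^{(k)}\bigr)^2$. I expect the main obstacle to be the bookkeeping for the vertex based case, since Lemma~\ref{lem:vc-loc-est} aggregates over \emph{edge}-neighbors rather than face-neighbors, so one must invoke the (standard) assumption that each subdomain shares an edge with only a bounded number of neighbors; once this is in hand the coloring argument goes through verbatim, and the rest is routine.
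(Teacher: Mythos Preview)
Your proposal is correct and follows exactly the paper's approach: define $u_0^{(k)}$ via the coarse interpolator $I_0^{\mathcal W}$ or $I_0^{\mathcal V}$, set $u_i^{(k)}=I_h(\theta_i(u-u_0^{(k)}))$, and invoke Lemmas~\ref{lem:wc-int-est}/\ref{lem:wc-loc-est} (resp.\ \ref{lem:vc-int-est}/\ref{lem:vc-loc-est}) together with a finite-overlap summation. The paper's own proof is a two-line reference to these lemmas, so your write-up simply fills in the admissibility check and the coloring argument that the paper leaves implicit.
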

\begin{proof}
For the wire basket based coarse space, let $u_0^{(1)}=I_0^{\mathcal W}u$ and $u_i^{(1)}=I_h(\theta_i(u-I_0^{\mathcal W}u))$.
The corresponding statement of the lemma then follows immediately from the Lemmas~\ref{lem:wc-int-est} and~\ref{lem:wc-loc-est}.

For the vertex based coarse space, let $u_0^{(2)}=I_0^{\mathcal V}u$ and $u_i^{(2)}=I_h(\theta_i(u-I_0^{\mathcal V}u))$.
It's statement of the lemma then follows immediately from the Lemmas~\ref{lem:vc-int-est} and~\ref{lem:vc-loc-est}.
\end{proof}

\noindent
{\bf Proof of Theorem~\ref{thm:ASM-cond-est}}
\begin{proof} The convergence theory of the abstract Schwarz framework indicates that, under three assumptions, the condition number of our method can be bounded as the following.
\begin{equation}
 \kappa(P^{(k)})\leq (C_0^{(k)})^2\omega(\rho+1).
\end{equation}
The three assumptions are concerned with estimating the three constants $\omega$, $\rho$ and $C^2_0$. It is easy to that $\omega=1$ here, as we have nested subspaces and are using exact solvers for the subproblems, and $\rho\leq N_c$, where $N_c$ is the maximum number of subspaces that cover any $x\in\Omega$. We refer to \cite[section 5.2]{smith2004domain} or \cite[section 2.3]{toselli2005domain} for details. An estimate of the last parameter $(C_0^{(k)})^2$, for $k=1$ or $k=2$, is given in Lemma~\ref{lem:StableDecomp}. 
The proof of the theorem now follows.
\end{proof} 

\section{Numerical results}\label{sec:numerical_results}
For the numerical experiment we choose our model elliptic problem to be defined on a unit cube, with homogeneous boundary condition and $f(x) = 100$. For the coefficient $\alpha$, we chose distributions with channels and inclusions across subdomain boundaries (faces and edges), cf. Fig.~\ref{fig:fourfaces} and Fig.~\ref{fig:twoedges}.   
The preconditioned conjugate gradient method is used to solve the system with our proposed additive Schwarz methods as preconditioners based on the vertex based coarse space and the wirebasket based coarse space, requiring the iteration to stop when the residual norm is reduced by the factor $10^{-6}$. The domain (unit cube) is partitioned into $2\times 2\times 2$ subdomains. Further discretization is done by dividing each subdomain into small cubes of size $h\times h\times h$, and then dividing the small cubes into six tetrahedra each.

The algorithms have been implemented in {\tt MATLAB}, using {\tt meshgrid} to mesh the cube and {\tt delaunayTriangulation} to triangulate the mesh, creating equal number of tetrahedrons in each direction, and routines from {\tt PDEToolbox} for assembling the stiffness and mass matrices. For the iterative solver, {\tt pcgeig}, an extension of {\tt MATLAB}'s preconditioned conjugate gradient routine {\tt pcg}, available from {\it m2matlabdb.ma.tum.de}, has been used. This routine, in addition to finding the solution, returns an estimate of the condition number for the preconditioned system, which are reported here. The two main ingredients in constructing the spectral components of our coarse spaces, are the solution of their corresponding eigenvalue problems and the discrete harmonic extensions inside subdomains. Built-in function {\tt eigs} has been used for the generalized eigenvalue problems. 

Eigenvalues below a given threshold $\lambda^*$ (cf. Theorem \ref{thm:ASM-cond-est}) are chosen for the construction of the spectral component of our coarse spaces with their corresponding eigenvectors, consequently, enrichments are only applied to those faces and edges that are affected by the inclusions. The larger the threshold $\lambda^*$ is the higher the number of eigenfunctions used. There are no particular guidelines in the literature for the optimal choice, we refer to for instance the work of \cite{klawonn:2016:adaptive} for other choices. 
For our experiment we choose the threshold proportional to $\frac Hh$. One can also choose the smallest eigenvalue of the corresponding problem with constant coefficients as the threshold, which will then ensure that no eigenfunctions be used when $\alpha$ is constant.  

\begin{figure}[htp]
\centering
 \includegraphics[width=0.24\linewidth]{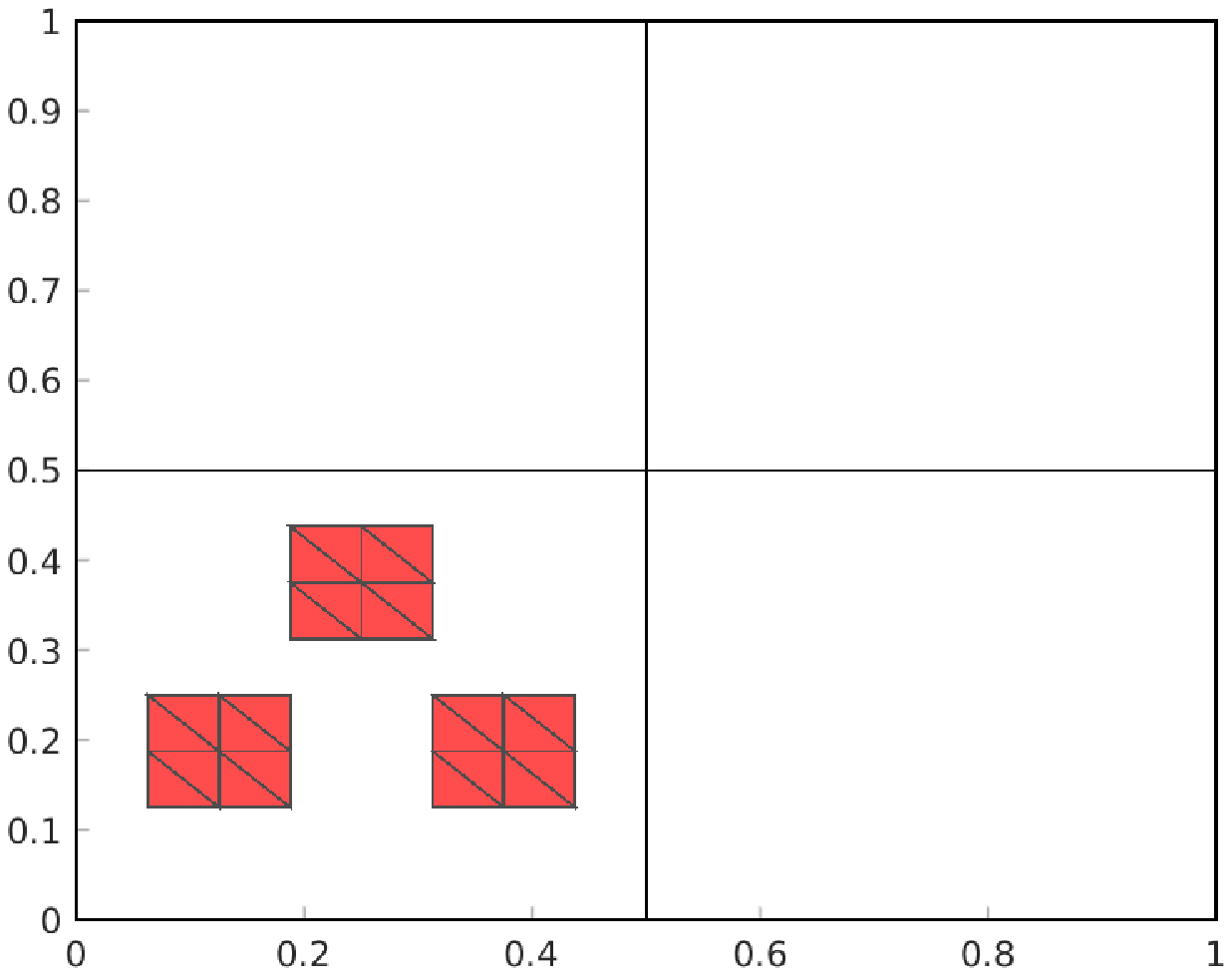}
 \includegraphics[width=0.24\linewidth]{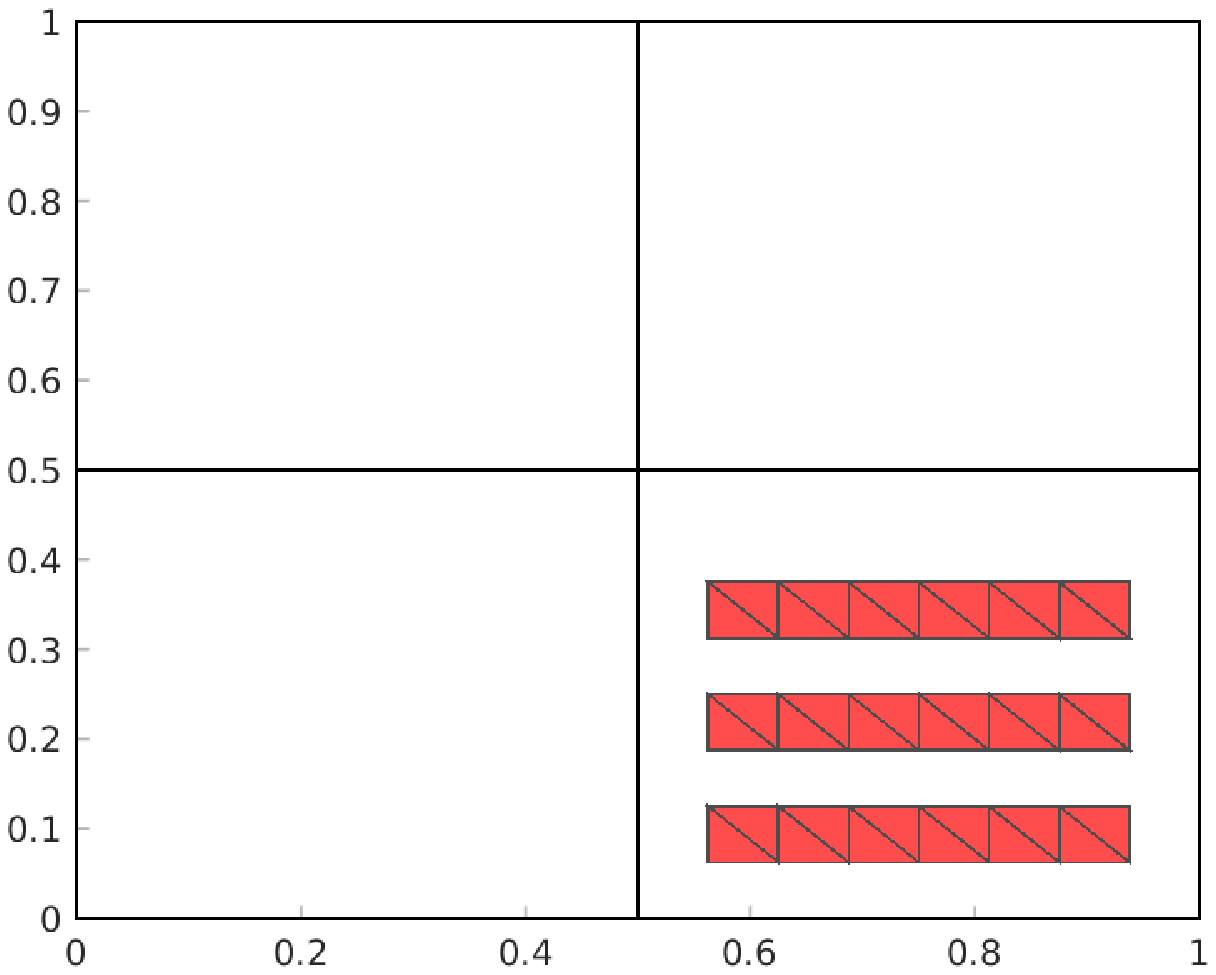}
 \includegraphics[width=0.24\linewidth]{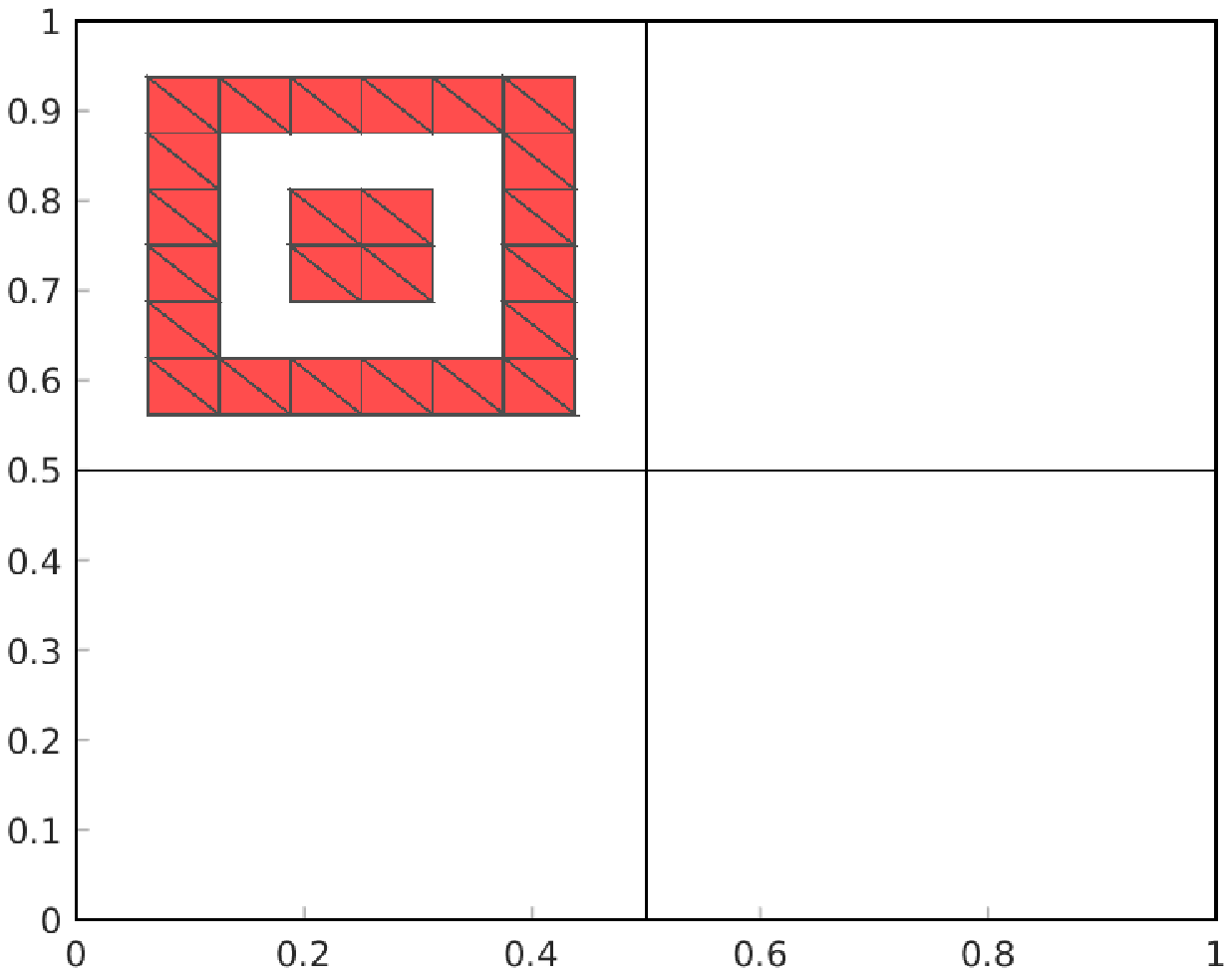}
 \includegraphics[width=0.24\linewidth]{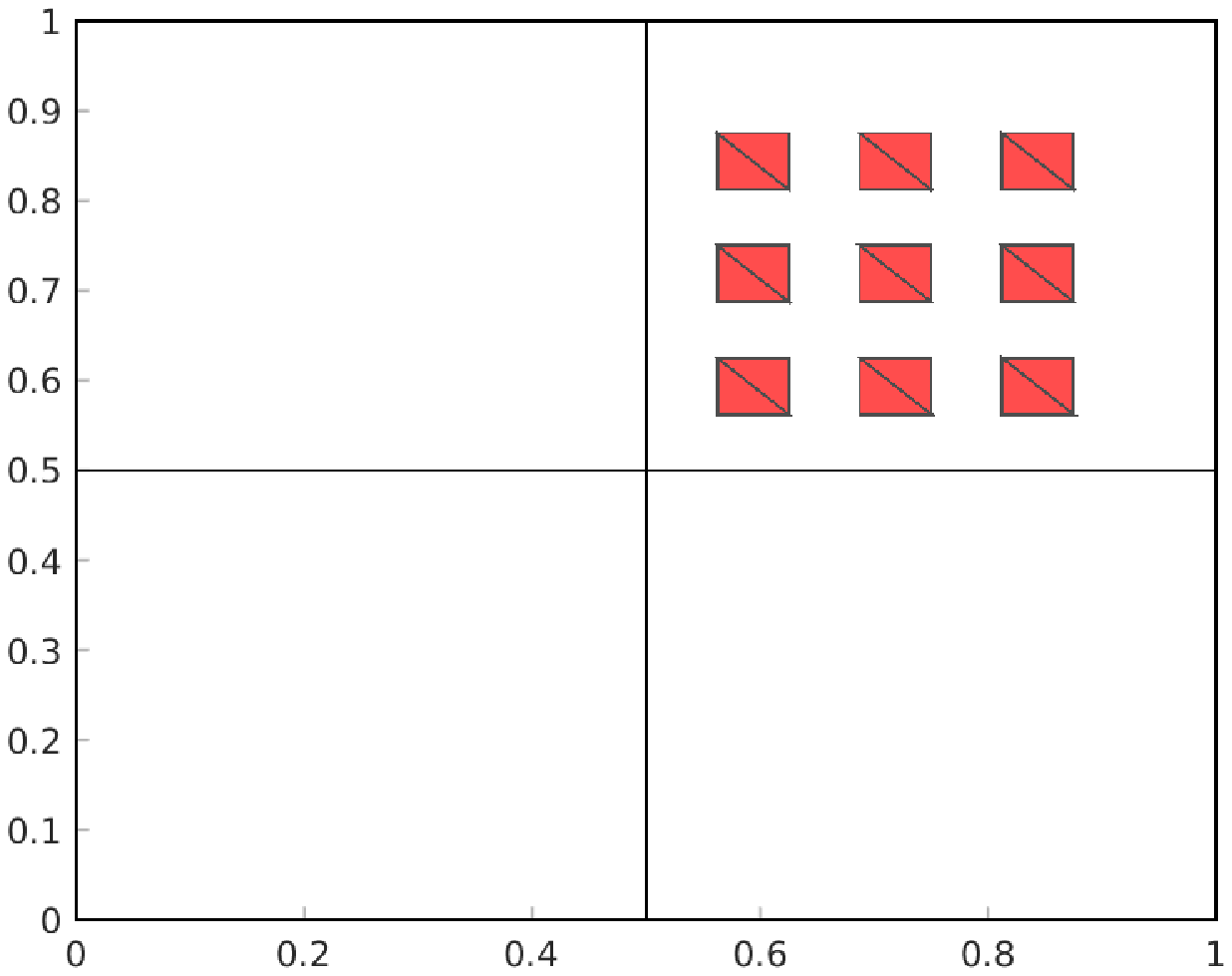}
 \includegraphics[width=0.49\linewidth]{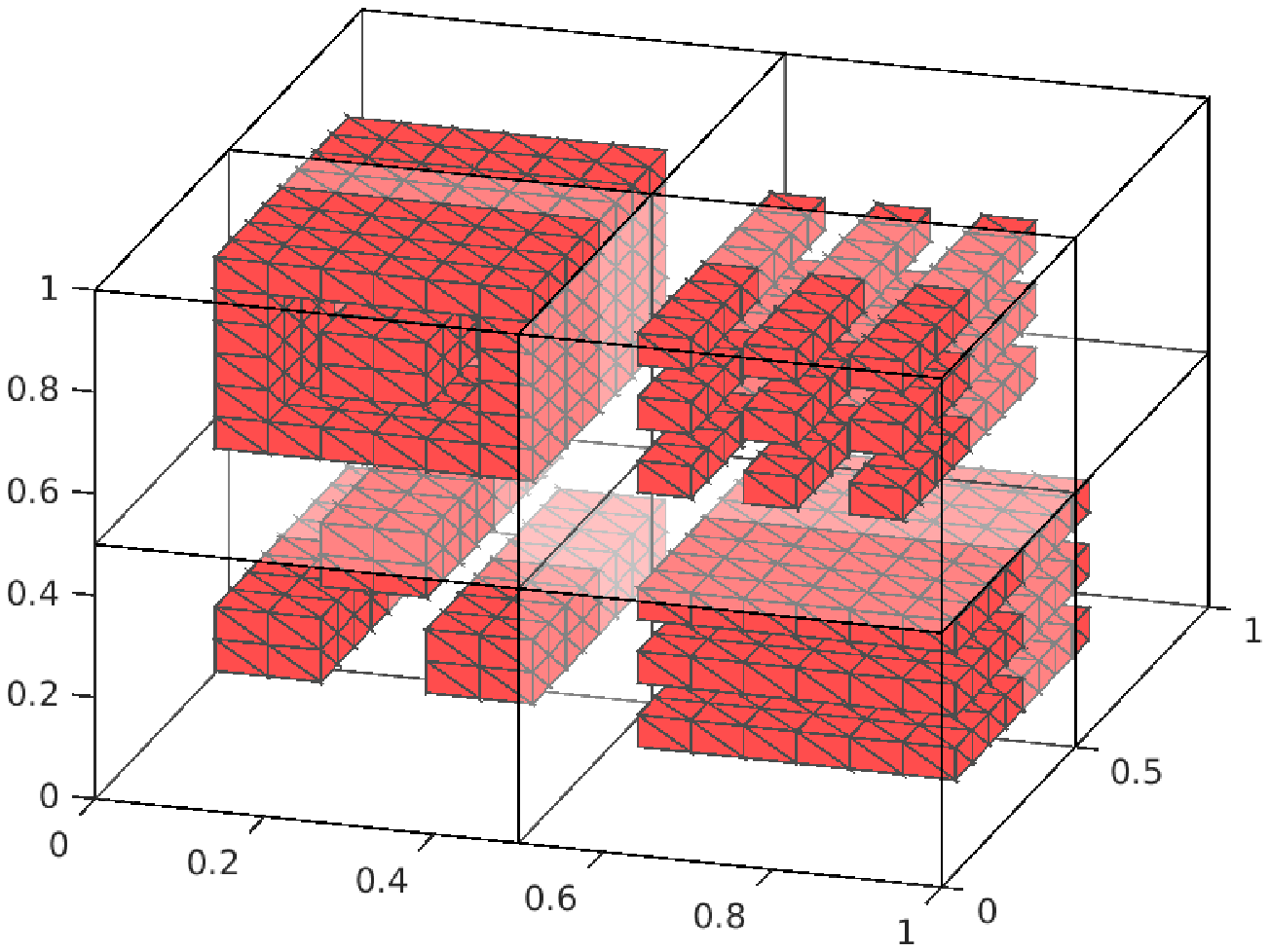}
 \includegraphics[width=0.49\linewidth]{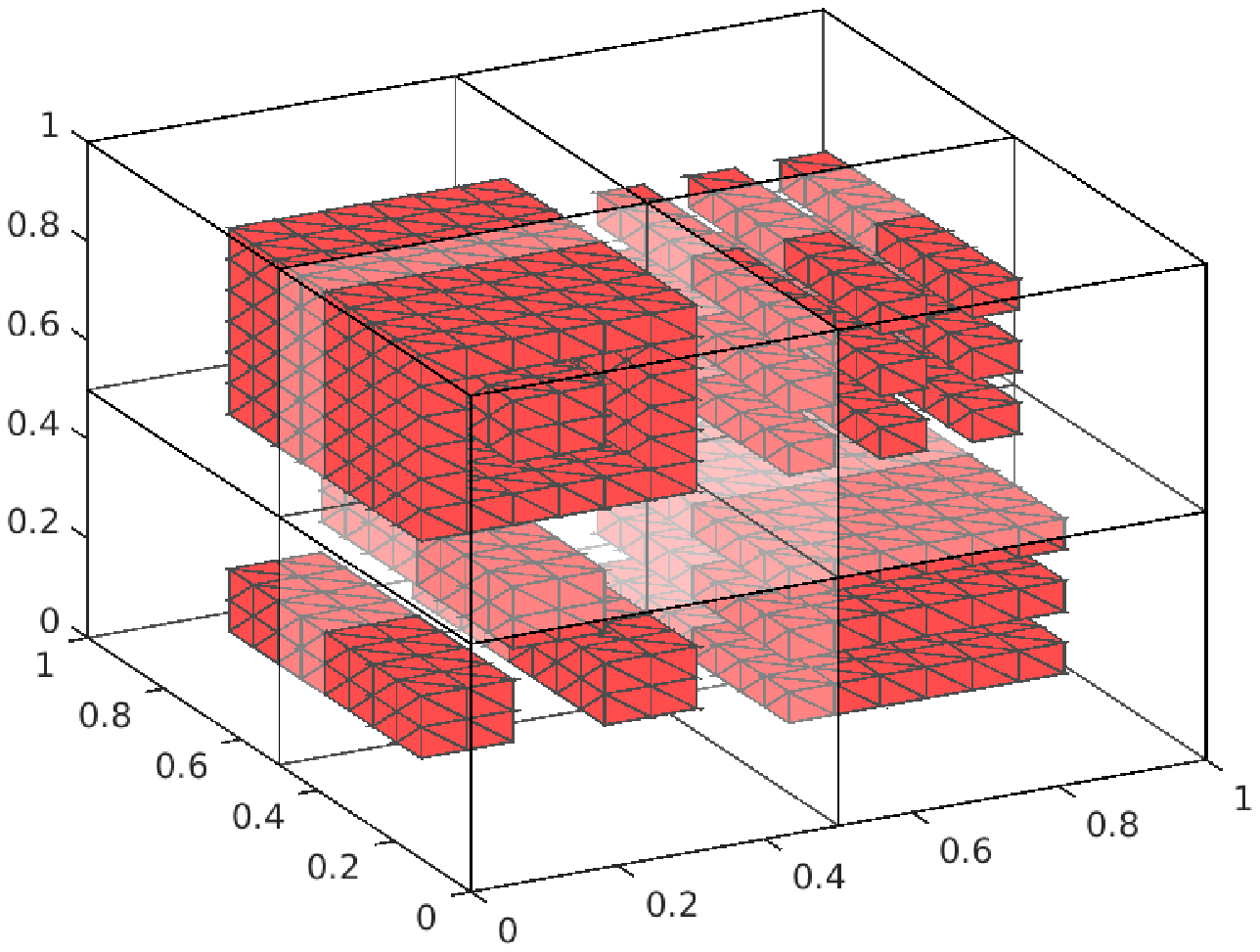}
 \caption{Showing eight ($2\times 2 \times 2$) subdomains, with inclusions (regions with large coefficients, $\alpha=1\mathrm{e}{6}$, shown in red) through faces in the $xz-$plane. The faces, as shown from left to right in the first row, are called Face 1, Face 2, Face 3, and Face 4. For the ease of illustrating, the distribution is shown from two different angles in the second row.}
\label{fig:fourfaces}
\end{figure}

\begin{table}[htp]
\begin{center}
\begin{tabular}{c|c|ccc}
\multicolumn{5}{c}{Vertex based coarse space} \\ 
 &No enrichment &\multicolumn{3}{c}{Face enrichment with $\lambda_{\mathcal{F}_I} < \lambda^*$} \\
 && $\lambda^*=0.0375$	&$\lambda^*=0.0187$			&$\lambda^*=0.0094$ \\
 $\alpha$ distribution &$H/h=8$		&$H/h=8$		&$H/h=16$			&$H/h=32$\\
 \hline 
 \\
Face 1		&$3.53\mathrm{e}{5} ~ (41)$		&$20.87 ~ (23)$			&$39.27 ~ (29)$			&$75.41 ~ (41)$	\\
Face 2		&$1.90\mathrm{e}{5} ~ (40)$		&$19.72 ~ (22)$			&$37.08 ~ (29)$			&$70.75 ~ (42)$	\\
Face 3		&$2.22\mathrm{e}{5} ~ (42)$		&$18.48 ~ (28)$			&$32.60 ~ (30)$			&$62.00 ~ (41)$	\\
Face 4		&$2.13\mathrm{e}{5} ~ (114)$		&$21.46 ~ (23)$			&$38.62 ~ (30)$			&$72.00 ~ (39)$	\\
 \hline
Face 1-4	&$3.42\mathrm{e}{5} ~ (170)$		&$18.49 ~ (29)$			&$24.52 ~ (30)$			&$45.29 ~ (39)$	\\
 \multicolumn{5}{c}{}\\
 \multicolumn{5}{c}{Wirebasket based coarse space}\\
 &No enrichment &\multicolumn{3}{c}{Face enrichment with $\lambda_{\mathcal{F}} < \lambda^*$} \\
 && $\lambda^*=0.075$	&$\lambda^*=0.035$			&$\lambda^*=0.0187$ \\
 $\alpha$ distribution &$H/h=8$		&$H/h=8$			&$H/h=16$			&$H/h=32$\\
 \hline 
 \\		
Face 1		&$1.14\mathrm{e}{4} ~ (34)$		&$12.56 ~ (19)$			&$19.55 ~ (24)$			&$34.92 ~ (32)$	\\
Face 2		&$9.23\mathrm{e}{1} ~ (32)$		&$12.84 ~ (21)$			&$20.36 ~ (30)$			&$35.18 ~ (33)$	\\
Face 3		&$2.50\mathrm{e}{5} ~ (42)$		&$12.93 ~ (22)$			&$20.37 ~ (27)$			&$35.55 ~ (33)$	\\
Face 4		&$1.21\mathrm{e}{5} ~ (79)$		&$12.90 ~ (20)$			&$19.54 ~ (25)$			&$34.75 ~ (33)$	\\
 \hline
Face 1-4	&$2.50\mathrm{e}{5} ~ (152)$		&$12.41 ~ (24)$			&$19.52 ~ (32)$			&$30.65 ~ (34)$	\\ 
 \\
\end{tabular}
\end{center}
\caption{Condition number estimates and iteration counts (inside brackets) for the two preconditioners, the vertex based and the wirebasket based preconditioner. The rows correspond to the different $\alpha$ distribution as shown in Fig.~\ref{fig:fourfaces}, and the columns correspond to no enrichment and with enrichment.}\label{tab:results_on_faces}
\end{table}

In our first experiment, we apply our methods to the model problems with high conductivity inclusions cutting through faces, as shown in Fig.~\ref{fig:fourfaces}, where the high conductivity regions (cells) are shown in red, with $\alpha=10^6$ in the inclusions and $\alpha=1$ otherwise. Condition number estimate and the number of iteration required to converge in each test case, are given in Table~\ref{tab:results_on_faces}. Each row of the table corresponds to a distribution, which includes inclusion through only one of the four faces, Face 1 to 4, cf. Fig.~\ref{fig:fourfaces}, or through all four faces. of the columns, the first one correspond to no enrichment, while the other three correspond to enrichment with different mesh sizes (keeping $H$ fixed, while varying $h$). The size of each inclusion, that is its width and length, has been kept the same throughout. As seen from the table, both adaptive enrichment of the coarse space does improve the performance of the iterative method, the condition number varies like $\frac Hh$ as $\lambda^*$ varies like $\frac hH$, plus the different distributions with face inclusions somehow do not affect the performance. 

\begin{table}[htp]
\begin{center}
\begin{tabular}{cccccccc}
\multicolumn{2}{c}{Face 1} 					& \multicolumn{2}{c}{Face 2} 					& \multicolumn{2}{c}{Face 3} 					& \multicolumn{2}{c}{Face 4} \\
\hline
$\lambda_{\mathcal{F}_I}$ 	& $\lambda_{\mathcal{F}}$ 	& $\lambda_{\mathcal{F}_I}$ 	& $\lambda_{\mathcal{F}}$ 	& $\lambda_{\mathcal{F}_I}$ 	& $\lambda_{\mathcal{F}}$ 	& $\lambda_{\mathcal{F}_I}$ 	& $\lambda_{\mathcal{F}}$ 	\\
  \hline	
	$0.0$			&$6.1\mathrm{e}{-8}$ 		& $0.0$				&$5.3\mathrm{e}{-8}$		& $0.0$				&$5.0\mathrm{e}{-8}$		&$0.0$				& $6.5\mathrm{e}{-8}$ 		\\
	$6.1\mathrm{e}{-8}$	&$1.1\mathrm{e}{-7}$		& $5.6\mathrm{e}{-8}$		&$1.2\mathrm{e}{-7}$ 		& $1.6\mathrm{e}{-7}$		&$1.7\mathrm{e}{-7}$ 		&$7.9\mathrm{e}{-8}$		& $1.6\mathrm{e}{-7}$ 		\\
	$7.5\mathrm{e}{-8}$	&$1.2\mathrm{e}{-7}$		& $1.7\mathrm{e}{-7}$		&$2.0\mathrm{e}{-7}$		& $5.4\mathrm{e}{-3}$		&$5.4\mathrm{e}{-3}$		&$7.9\mathrm{e}{-8}$		& $1.6\mathrm{e}{-7}$		\\
$\mathbf{2.5\mathrm{e}{-2}}$	&$\mathbf{1.1\mathrm{e}{-1}}$ 	& $\mathbf{1.3\mathrm{e}{-2}}$	&$1.3\mathrm{e}{-2}$		& $5.4\mathrm{e}{-3}$		&$5.4\mathrm{e}{-3}$		&$1.6\mathrm{e}{-7}$		& $2.4\mathrm{e}{-7}$		\\
				&				& 				&$1.3\mathrm{e}{-2}$		& $\mathbf{1.9\mathrm{e}{-2}}$	&$\mathbf{1.9\mathrm{e}{-2}}$	&$2.3\mathrm{e}{-7}$		& $2.6\mathrm{e}{-7}$		\\
				&				&				&$1.3\mathrm{e}{-2}$		&				&				&$2.3\mathrm{e}{-7}$		& $2.6\mathrm{e}{-7}$		\\
				&				&				&$\mathbf{5.0\mathrm{e}{-2}}$	&				&				&$2.8\mathrm{e}{-7}$		& $3.2\mathrm{e}{-7}$ 		\\
				&				&				&				&				&				&$2.9\mathrm{e}{-7}$		& $3.2\mathrm{e}{-7}$		\\
				&				&				&				&				&				&$3.7\mathrm{e}{-7}$		& $3.8\mathrm{e}{-7}$		\\
				&				&				&				&				&				&$\mathbf{2.8\mathrm{e}{-2}}$	& $\mathbf{1.4\mathrm{e}{-1}}$	\\		
\end{tabular}
\end{center}
\caption{Showing the eigenvalues for $H/h=32$ and the different faces, which (except for the one in boldface) are below the threshold and have been selected for the enrichment, and the one (in boldface) which is just above the threshold, cf. Table \ref{tab:face eigenvalues}. $\lambda_{\mathcal{F}_I}$ and $\lambda_{\mathcal{F}}$ represent the eigenvalues of the face eigenvalue problems of the vertex based and the wirebasket based coarse space, respectively.} 
\label{tab:face eigenvalues}
\end{table}

To have an idea of the size of the enrichment, that is the number of eigenfunctions used to enrich the coarse space, we have listed their eigenvalues for the largest problem $\frac{H}{h} = 32$, in Table \ref{tab:face eigenvalues} where each pair of columns represents a face. The eigenvalues selected are listed above the eigenvalue printed in boldface which is the first eigenvalue not used in the enrichment. As we can see from the table the number of eigenfunctions needed in each case are small. Normally, there is a clear separation between the eigenvalues selected for the enrichment and the one not selected, which is visible through the jump in their values which is proportional to the contrast in $\alpha$, although in some cases (e.g. Face 2 and -3 here) a few eigenvalues from the other side of the jump may get selected. Note that the first eigenvalue of the face eigenvalue problem for the vertex based coarse space is always zero, and is always selected for the enrichment. Since there are no inclusions cutting through edges here, no edge enrichment is needed.   

In our next experiment, we apply our methods to the model problems with high conductivity inclusions cutting through edges, cf. Fig.~\ref{fig:twoedges}, for which we have chosen two rectangular slabs with $\alpha=10^6$ as inclusions placed horizontally, which is illustrated by their projections on to the $xz-$ and $yz$ plane. As we can see, they not only cut the two vertical edges, but also the eight vertical faces, requiring to solve eigenvalue problems both on edges and faces, which due to symmetry reduces to solving an identical eigenvalue problem on the edges, a second identical eigenvalue problem on the $xz$ faces, and a third identical eigenvalue problem on the $yz$ faces. The corresponding eigenvalues for the problem $\frac Hh = 32$ are given in Table \ref{tab:edge eigenvalues}. The condition number estimates and the iterations counts are given in Table~\ref{tab:results_on_edges}. we have provided results for two test cases, one with the edge inclusions, and one with both edge and face inclusions together. As seen from the table, enrichments are needed to improve the convergence of the iteration in all cases, except for the wirebasket based coarse space with edge inclusion alone, in which case, there is no need for enrichment. This is obvious since the wirebasket based coarse space restricted to an edge is a full space.    

\begin{figure}[htp]
\centering
 \includegraphics[width=0.3\linewidth]{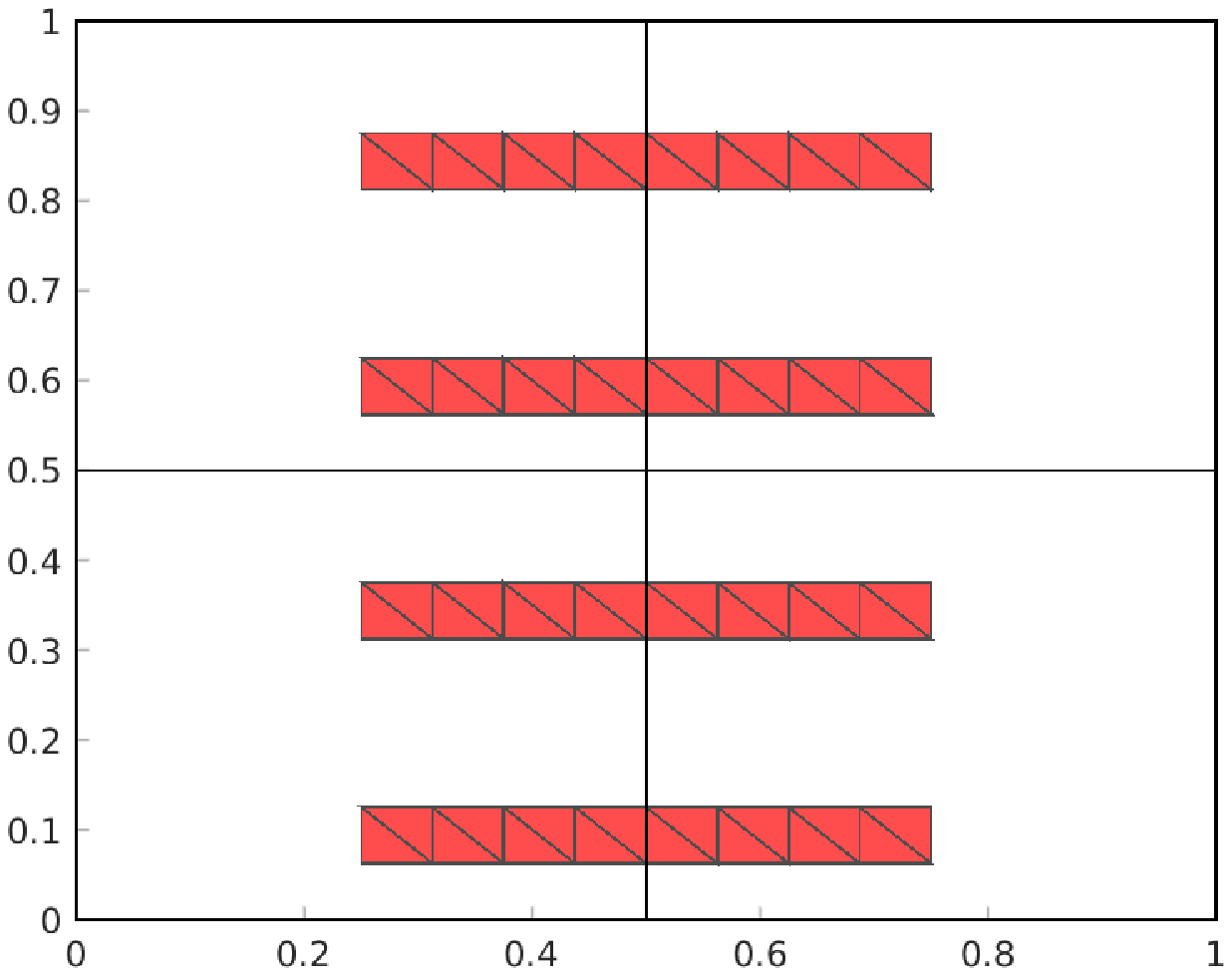}
 \includegraphics[width=0.3\linewidth]{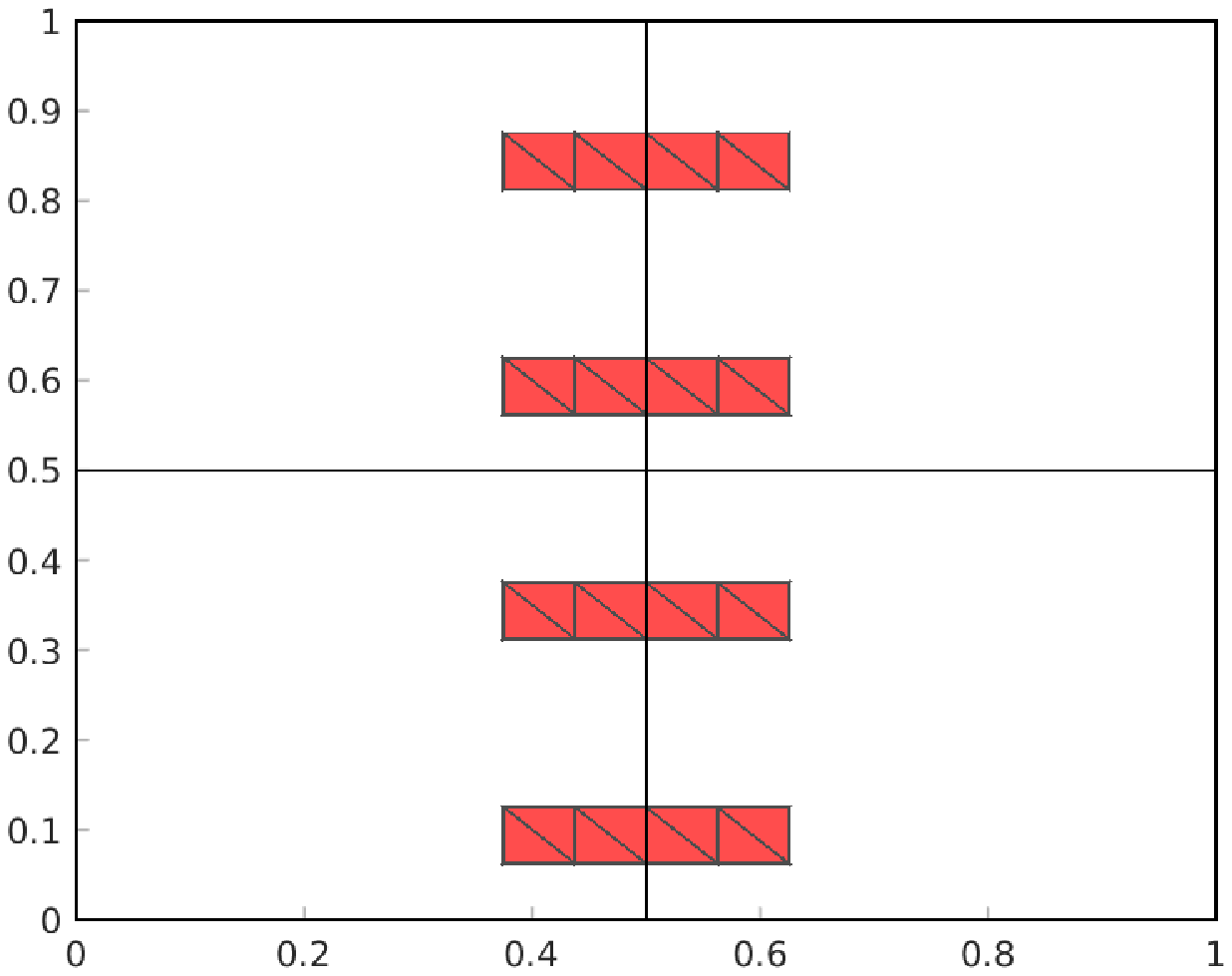}
 \caption{Inclusions with large coefficient cutting through the vertical edges (regions with large coefficients, $\alpha=10^6$, shown in red) are illustrated by their projections on to the $xz$ plane (left figure) and the $yz$ plane (right figure)}
 \label{fig:twoedges}
\end{figure}

\begin{table}[htp]
\begin{center}
\begin{tabular}{c|c|ccc}
\multicolumn{5}{c}{Vertex based coarse space} \\ 
 &No enrichment &\multicolumn{3}{c}{Edge-, Face enrichment: $\lambda_{\cal E} < \lambda^*$, $\lambda_{\mathcal{F}_I} < \lambda^*$} \\
 && $\lambda^*=0.1512$ & $\lambda^*=0.0756$ & $\lambda^*=0.0378$ \\
 $\alpha$ distribution &$H/h=8$		&$H/h=8$		&$H/h=16$			&$H/h=32$\\
 \hline 
 \\
Edge		&$8.29\mathrm{e}{5} ~~ (69)$		&$33.84 ~ (27)$		&$37.53  ~ (35)$		&$44.98 ~ (46)$	 \\
 \hline
Face 1-4 and Edge	& $3.90\mathrm{e}{5} ~ (167)$		&$25.72 ~ (31)$		&$34.96 ~ (39)$		&$46.95 ~ (49)$	\\
 \multicolumn{5}{c}{}\\
 \multicolumn{5}{c}{Wirebasket based coarse space}\\
 &No enrichment &\multicolumn{3}{c}{Face enrichment: $\lambda_{\mathcal{F}} < \lambda^*$} \\
 && $\lambda^*=0.075$	&$\lambda^*=0.035$			&$\lambda^*=0.0187$ \\
 $\alpha$ distribution &$H/h=8$		&$H/h=8$			&$H/h=16$			&$H/h=32$\\
 \hline 
 \\		
Edge		&$21.07 ~ (26)$		&$12.49 ~ (26)$			&$18.28 ~ (27)$			&$31.42 ~ (33)$	   \\ \hline
Face 1-4 and Edge	&$1.26\mathrm{e}{5} ~ (99)$		&$15.63 ~ (25)$			&$21.09 ~ (31)$			&$29.26 ~ (32)$
\\ \\
\end{tabular}
\end{center}
\caption{Condition number estimates and iteration counts (inside brackets) for the two preconditioners, the vertex based and the wirebasket based. The rows correspond to different $\alpha$ distribution as shown in Fig.~\ref{fig:fourfaces}--\ref{fig:twoedges}, and the columns correspond to no enrichment and with enrichment.}\label{tab:results_on_edges}
\end{table}

%
\begin{table}[htp]
\begin{center}
\begin{tabular}{ccccc}
			\multicolumn{1}{c}{Each vertical edge} 	& \multicolumn{2}{c}{Each face on the $xz$ plane} 					& \multicolumn{2}{c}{Each face on the $yz$ plane} 	 \\
\hline
$\lambda_{\mathcal{E}}$ 	& $\lambda_{\mathcal{F}_I}$ 	& $\lambda_{\mathcal{F}}$ 	& $\lambda_{\mathcal{F}_I}$ 	& $\lambda_{\mathcal{F}}$ 	 \\
  \hline	
$3.3\mathrm{e}{-8}$		& $0.0$				&$7.2\mathrm{e}{-3}$		& $0.0$				&$\mathbf{2.7\mathrm{e}{-2}}$	\\
$7.5\mathrm{e}{-8}$		& $4.5\mathrm{e}{-8}$		&$\mathbf{4.0\mathrm{e}{-2}}$ 	& $5.8\mathrm{e}{-7}$		&				\\
$\mathbf{6.8\mathrm{e}{-2}}$	& $8.2\mathrm{e}{-3}$		&				& $3.8\mathrm{e}{-3}$		&				\\
				& $1.8\mathrm{e}{-2}$		&				& $1.4\mathrm{e}{-2}$		&				\\
				& $3.1\mathrm{e}{-2}$		&				& $3.2\mathrm{e}{-2}$		&				\\
				& $3.1\mathrm{e}{-2}$		&				& $\mathbf{3.9\mathrm{e}{-2}}$	\\
				& $\mathbf{4.3\mathrm{e}{-2}}$	&				& 				\\				
\end{tabular}
\end{center}
\caption{Showing the eigenvalues for $H/h=32$ and the different eigenvalue problems, which (except for the one in boldface) are below the threshold and have been selected for the enrichment of the two coarse spaces, and the one (in boldface) which is just above the threshold, cf. Table \ref{tab:edge eigenvalues}. $\lambda_{\mathcal{E}}$ represent the eigenvalues of the edge eigenvalue problem, $\lambda_{\mathcal{F}_I}$ and $\lambda_{\mathcal{F}}$ represent the eigenvalues of the face eigenvalue problems for the vertex based and the wirebasket based coarse space, respectively.} 
\label{tab:edge eigenvalues}
\end{table}

We have also done experiments with varying jumps in the coefficients for each distribution shown, and found the methods robust with respect to the jumps, and inclusions cutting through faces and edges. Of the two, the wirebasket based coarse space has a convergence which is faster than the vertex based coarse space, however, the size of the coarse space is much larger in the former one.


%

\end{document}